
\documentclass{birkjour}
%
%
%
 \newtheorem{theorem}{Theorem}[section]
 \newtheorem{corollary}[theorem]{Corollary}
 \newtheorem{lemma}[theorem]{Lemma}
 \newtheorem{proposition}[theorem]{Proposition}
 \theoremstyle{definition}
 \newtheorem{definition}[theorem]{Definition}
 \newtheorem{remark}[theorem]{Remark}
 \newtheorem{example}[theorem]{Example}
 \numberwithin{equation}{section}

\begin{document}

%
%
%
%
%
%
%
%
%

\title[Bochner $pg$-frames]
 {Bochner $pg$-frames}

\author [M. Rahmani]{Morteza Rahmani}
\address{Department of Pure Mathematics, \\
Faculty of Mathematical Sciences, \\
University of Tabriz, Tabriz, Iran \\}
\email{m\b{ }rahmani@tabrizu.ac.ir}

\author[M. H. Faroughi]{Mohammad H. Faroughi}

\address{%
Department of Pure Mathematics, \\
Faculty of Mathematical Sciences, \\
University of Tabriz, Tabriz, Iran \\
 and \\ Islamic Azad University-Shabestar Branch,\\ Shabestar, Iran\\
}
\email{mhfaroughi@yahoo.com}

\subjclass{42C15, 46G10}

\keywords{Banach space, Hilbert space, Frame, Bochner measurable, Bochner $pg$-frame, Bochner $pg$-Bessel family, Bochner $qg$-Riesz basis}


\begin{abstract}
In this paper we introduce the concept of Bochner $pg$-frames for Banach spaces. We characterize the Bochner $pg$-frames and specify the optimal bounds of a Bochner $pg$-frame. Then we define a Bochner $qg$-Riesz basis and verify the relations between Bochner $pg$-frames and Bochner $qg$-Riesz bases. Finally, we discuss the perturbation  of Bochner $pg$-frames.
\end{abstract}

\maketitle
\section{Introduction and Preliminaries}
The concept of frames (discrete frames) in Hilbert spaces has been introduced by Duffin and Schaeffer \cite{Duff} in 1952 to study some deep problems in nonharmonic Fourier series. After the fundamental paper \cite{Daub} by Daubechies, Grossmann and Meyer, frame theory began to be widely used, particularly in the more specialized context of wavelet frames and Gabor frames. Frames play a fundamental role in signal processing, image and data compression and sampling theory. They provided an alternative to orthonormal bases, and have the advantage of possessing a certain degree of redundancy. A discrete frame is a countable family of elements in a separable Hilbert space which allows for a stable, not necessarily unique, decomposition of an arbitrary element into an expansion of the frame elements. For more details about discrete frames see \cite{gj91}. Resent results show that frames can provide a universal language in which many fundamental problems in pure mathematics can be formulated: The Kadison-Singer problem in operator algebras, the Bourgain-Tzafriri conjecture in Banach space theory, paving Toeplitz operators in harmonic analysis and many others. Various types of frames have been proposed, for example, $pg$-frames in Banach spaces \cite{pg-frame}, fusion frames \cite{cay}, continuous frames in Hilbert space \cite{hy43}, continuous frames in Hilbert spaces \cite{hir103}, continuous $g$-frames in Hilbert spaces \cite{cg-frame}, $(p,Y)$-operator frames for a Banach space \cite{Hu1}.

This paper is organized as follows. In Section 2, we introduce the concept of Bochner $pg$-frames for Banach spaces. Actually, continuous frames motivate us to introduce this kind of frames and analogous to continuous frames which are generalized version of discrete frames, we want to generalize $pg$-frames in a continuous sense. Like continuous frames, these frames can be used in those areas that we need generalized  frames in a continuous aspect.   Also, we  define corresponding operators (synthesis, analysis and frame operators) and discuss their characteristics and properties. In Section 3, we define a Bochner $qg$-Riesz basis and verify its relations by Bochner $pg$-frames. Finally, Section 4 is devoted to perturbation of Bochner $pg$-frames.

Throughout this paper, $X$ and $H$ will be a Banach space and a Hilbert space, respectively, and $\{H_{\omega}\}_{\omega\in \Omega}$ is a family of Hilbert spaces.

Suppose that $(\Omega,\Sigma,\mu)$ is a measure space, where $\mu$ is a positive measure.

The following definition introduces  Bochner measurable functions.

\begin{definition}
A function $f:\Omega\longrightarrow X$ is called  Bochner measurable if there exists a sequence of simple functions $\{f_{n}\}_{n=1}^{\infty}$ such that
$$\lim_{n\rightarrow\infty}\|f_{n}(\omega)-f(\omega)\|=0, ~~a.e.~[\mu].$$
\end{definition}

\begin{definition}
If $\mu$ is a measure on $(\Omega,\Sigma)$ then $X$ has the Radon-Nikodym property with respect to $\mu$ if for every countably additive vector measure $\gamma$ on $(\Omega,\Sigma)$ with values in $X$ which has bounded variation and is absolutely continuous with respect to $\mu$, there is a Bochner integrable function $g:\Omega\longrightarrow X$ such that
$$\gamma(E)=\int_{E}g(\omega)d\mu(\omega)$$
for every  set $E\in\Sigma$.

 A Banach space $X$ has the Radon-Nikodym property if $X$ has the Radon-Nikodym property with respect to every finite measure. Spaces with Radon-Nikodym property include separable dual spaces and reflexive spaces, which include, in particular, Hilbert spaces.
\end{definition}

\begin{remark}\label{00}
Suppose that $(\Omega,\Sigma,\mu)$ is a measure
space and $X^{\ast}$ has the Radon-Nikodym property. Let $1\leq p\leq\infty$. The Bochner space of $L^{p}(\mu,X)$ is defined to be the Banach space of (equivalence classes of) $X$-valued Bochner measurable functions $F$ from $\Omega$ to $X$ for which the norms
$$\|F\|_{p}=(\int_{\Omega}\|F(\omega)\|^{p}d\mu(\omega))^{\frac{1}{p}},\quad 1\leq p<\infty$$
$$\|F\|_{\infty}=ess~sup_{\omega\in \Omega}\|F(\omega)\|,\quad p=\infty$$
are finite.
In \cite{die1}, \cite{c1} and \cite[p.51]{Fleming} it is proved that if $1\leq p<\infty$ and $q$ is such that $\frac{1}{p}+\frac{1}{q}=1$, then $L^{q}(\mu,X^{\ast})$ is isometrically
isomorphic to $(L^{p}(\mu,X))^{\ast}$ if and only if $X^{\ast}$ has
the Radon-Nikodym property. This isometric
isomorphism is the mapping
$$\psi:L^{q}(\mu,X^{\ast})\rightarrow (L^{p}(\mu,X))^{\ast}$$$$g\mapsto\psi(g),$$ where the mapping $\psi(g)$ is defined on $L^{p}(\mu,X)$ by $$\psi(g)(f)=\int_{\Omega}g(\omega)(f(\omega))d\mu(\omega),\quad f\in L^{p}(\mu,X).$$
So  for all $f\in L^{p}(\mu,X)$ and $g\in L^{q}(\mu,X^{\ast})$ we have
$$<f,\psi(g)>=\int_{\Omega}<f(\omega),g(\omega)>d\mu(\omega).$$
In the following, we use the notation $<f,g>$ instead of $<f,\psi(g)>$, so for all $f\in L^{p}(\mu,X)$ and $g\in L^{q}(\mu,X^{\ast})$ $$<f,g>=\int_{\Omega}<f(\omega),g(\omega)>d\mu(\omega).$$
Particularly, if $H$ is a Hilbert space then $(L^{p}(\mu,H))^{*}$ is isometrically isomorphic to $L^{q}(\mu,H)$. So, for all $f\in L^{p}(\mu,H)$ and $g\in L^{q}(\mu,H)$
$$<f,g>=\int_{\Omega}<f(\omega),g(\omega)>d\mu(\omega),$$
in which $<f(\omega),g(\omega)>$ dose not mean the inner product of elements $f(\omega)$, $g(\omega)$ in $H$, but
$$<f(\omega),g(\omega)>=\nu(g(\omega))(f(\omega)),$$
where $\nu:H\longrightarrow H^{*}$ is the isometric isomorphism between $H$ and $H^{*}$, for more details refer to \cite[p.54]{Murphy}.
\end{remark}

We will use the following lemma which is proved in \cite{Heuser}.
\begin{lemma}\label{0}
If $U:X\longrightarrow Y$ is a bounded operator from a Banach space $X$ into a Banach space $Y$ then its adjoint $U^{*}:Y^{*}\longrightarrow X^{*}$
is surjective if and only if $U$ has a bounded inverse on $R_{U}$.
\end{lemma}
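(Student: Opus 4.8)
The plan is to recognize that ``$U$ has a bounded inverse on $R_U$'' is exactly the statement that $U$ is \emph{bounded below}, i.e.\ that there is a constant $c>0$ with $\|Ux\|\ge c\|x\|$ for all $x\in X$. Indeed, injectivity of $U$ makes the corestriction $U\colon X\to R_U$ a bijection, and its inverse is bounded precisely when such a $c$ exists (and this in turn forces $R_U$ to be closed, since $\|Ux\|\ge c\|x\|$ makes $R_U$ complete). So the lemma reduces to proving the equivalence ``$U^{*}$ surjective $\iff$ $U$ bounded below.''

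For the direction ($\Leftarrow$) I would use Hahn--Banach. Assuming $\|Ux\|\ge c\|x\|$, I take any $f\in X^{*}$ and define a functional $h$ on the range $R_U$ by $h(Ux)=f(x)$; this is well defined because $U$ is injective, and it is bounded since $|h(Ux)|=|f(x)|\le \|f\|\,\|x\|\le (\|f\|/c)\,\|Ux\|$. Extending $h$ to some $g\in Y^{*}$ by Hahn--Banach yields $(U^{*}g)(x)=g(Ux)=f(x)$ for all $x$, so $U^{*}g=f$ and $U^{*}$ is surjective.

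The direction ($\Rightarrow$) is the main obstacle, and here I would invoke the open mapping theorem. Since $U^{*}\colon Y^{*}\to X^{*}$ is a bounded surjection between Banach spaces, it is open, so there is $\delta>0$ such that every $f\in X^{*}$ with $\|f\|\le\delta$ can be written as $f=U^{*}g$ for some $g\in Y^{*}$ with $\|g\|\le 1$. Then for a fixed $x\in X$ and any such $f$, $|f(x)|=|(U^{*}g)(x)|=|g(Ux)|\le\|g\|\,\|Ux\|\le\|Ux\|$; taking the supremum over all $f$ in the ball of radius $\delta$ and using $\sup_{\|f\|\le\delta}|f(x)|=\delta\|x\|$ gives $\delta\|x\|\le\|Ux\|$. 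Hence $U$ is bounded below, which closes the equivalence.

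The step I expect to require the most care is upgrading the open-mapping conclusion from an open-ball containment to the clean statement that the closed $\delta$-ball of $X^{*}$ lies inside the image under $U^{*}$ of the closed unit ball of $Y^{*}$; this is the standard but slightly delicate normalization in open-mapping arguments, and it is precisely what makes the constant $\delta$ usable as the lower bound for $U$.
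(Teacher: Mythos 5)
Your proof is correct, but there is nothing in the paper to compare it against: the authors do not prove this lemma, they quote it as a known result from Heuser's \emph{Functional Analysis} \cite{Heuser}. Your argument is the standard self-contained one, and both halves are sound. In the ($\Leftarrow$) direction, defining $h$ on $R_U$ by $h(Ux)=f(x)$, checking $|h(Ux)|\le (\|f\|/c)\|Ux\|$, and extending by Hahn--Banach is exactly right; note that this correctly needs no closedness of $R_U$, since Hahn--Banach extends from arbitrary subspaces. In the ($\Rightarrow$) direction, the open mapping theorem applies because $X^{*}$ and $Y^{*}$ are automatically Banach (duals of normed spaces are complete), and the normalization you flag as delicate is in fact harmless: from the open-ball inclusion $B_{X^{*}}(0,\delta)\subseteq U^{*}\bigl(B_{Y^{*}}(0,1)\bigr)$ one gets $|f(x)|\le\|Ux\|$ for every $\|f\|<\delta$, and since $\sup_{\|f\|<\delta}|f(x)|=\delta\|x\|$ (a consequence of Hahn--Banach), the bound $\delta\|x\|\le\|Ux\|$ follows without ever passing to closed balls. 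One small point worth making explicit: the inequality $\delta\|x\|\le\|Ux\|$ subsumes injectivity of $U$, so the conclusion ``$U$ has a bounded inverse on $R_U$'' is fully recovered, and your parenthetical remark that boundedness below forces $R_U$ to be closed, while true, is not needed anywhere in the equivalence.
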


Note that for a collection $\{H_{\beta}\}_{\beta\in \mathfrak{B}}$ of Hilbert spaces, we can suppose that there exists a Hilbert space $K$ such that for all $\beta\in \mathfrak{B}$, $H_{\beta}\subseteq K$, where
$K=\oplus_{\beta \in \mathfrak{B}} H_{\beta}$ is the direct sum of $\{H_{\beta}\}_{\beta\in \mathfrak{B}}$, see 3.1.5 in \cite[p.81]{hir08}.

\section{Bochner $pg$-frames}
Bochner spaces are often used in the functional analysis approach to the study of partial differential equations that depend on time, e.g. the heat equation: if the temperature $g(t,x)$ is a scalar function of time and space, one can write $(f(t))(x):=g(t,x)$ to make $f$ a function of time, with $f(t)$ being a function of space, possibly in some Bochner space. Now, we intend to use this space to define a new kind of frames which contain all of continuous and discrete frames, in other words we will generalize the $g$-frames to a continuous case that is constructed on concept of Bochner spaces. Of course, this new frame can be useful in function spaces and operator theory to gain some general results that are achieved by $g$-frames or discrete frames.

\subsection{Bochner $pg$-frames and corresponding operators}
We start with the definition of Bochner $pg$-frames. Then we will give some characterizations of these frames.
\begin{definition}
Let $1<p<\infty$. The family $\{\Lambda_{\omega}\in B(X,H_{\omega}): \omega \in \Omega\}$ is a Bochner $pg$-frame for $X$ with respect to $\{H_{\omega}\}_{\omega\in \Omega}$ if:
\\$(i)$ For each $x\in X$, $\omega \longmapsto \Lambda_{\omega}(x)$ is Bochner measurable,
\\$(ii)$ there exist positive constants $A$ and $B$ such that
\begin{eqnarray}\label{df1}
A\|x\|\leq (\int_{\Omega}\|\Lambda_{\omega}(x)\|^{p}d\mu(\omega))^{\frac{1}{p}}\leq B\|x\|,\quad x\in X.
\end{eqnarray}
\end{definition}
$A$ and $B$ are called the lower and upper Bochner $pg$-frame bound, respectively.
We call that $\{\Lambda_{\omega}\}_{\omega \in \Omega}$ is a tight Bochner $pg$-frame if $A$ and $B$ can be chosen such that $A=B$  and a Parseval  Bochner $pg$-frame if $A$ and $B$ can be chosen such that $A=B=1$. If for each $\omega \in \Omega$, $H_{\omega}=H$ then $\{\Lambda_{\omega}\}_{\omega \in \Omega}$ is called a Bochner $pg$-frame for $X$ with respect to $H$. A family $\{\Lambda_{\omega}\in B(X,H_{\omega}): \omega \in \Omega\}$ is called a Bochner $pg$-Bessel family for $X$ with respect to $\{H_{\omega}\}_{\omega \in \Omega}$ if the right inequality in (\ref{df1}) holds. In this case, $B$ is called the Bessel bound.

\begin{example}
Let $\{f_i\}_{i \in I}$ be a frame for Hilbert space $H$, $\Omega=I$ and $\mu$ be a counting measure on $\Omega$. Set
$$\Lambda_i:H\longrightarrow \mathbb{C}$$
$$\Lambda_i(h)=<h,f_i>,~h\in H.$$
Then $\{\Lambda_{i}\}_{i \in I}$ is a Bochner $pg$-frame for $H$ with respect to $\mathbb{C}$.
\end{example}

\begin{example}
Let $\Omega=\{a,b,c\}$, $\Sigma=\{\emptyset, \{a,b\},\{c\},\Omega\}$ and $\mu:\Sigma\longrightarrow [0,\infty]$ be a measure such that $\mu(\emptyset)=0$, $\mu(\{a,b\})=1$,  $\mu(\{c\})=1$ and $\mu(\Omega)=2$. Assume that $X=L^p(\Omega)$ and $\{H_{\omega}\}_{\omega\in \Omega}$ is a family of arbitrary Hilbert spaces and consider a fixed family $\{h_\omega\}_{\omega\in \Omega}\subseteq \{H_\omega\}_{\omega\in \Omega}$  such that $\|h_\omega\|=1$, $\omega\in \Omega$. Suppose that
$$\Lambda_\omega:L^p(\Omega)\longrightarrow H_\omega,$$
$$\Lambda_\omega(\varphi)=\varphi(c)h_\omega.$$

It is clear that $\Lambda_\omega$'s are bounded and for each $\varphi\in L^p(\Omega)$, $\omega \longmapsto \Lambda_{\omega}(\varphi)$ is Bochner measurable.
Also,
\begin{align*}
\int_{\Omega}\|\Lambda_{\omega}(x)\|^{p}d\mu(\omega)=\int_{\Omega}|\varphi(c)|^pd\mu(\omega)=|\varphi(c)|^p\mu(\Omega)=2|\varphi(c)|^p.
\end{align*}
So, $\{\Lambda_{\omega}\}_{\omega \in \Omega}$ is a Bochner $pg$-frame for $L^p(\Omega)$ with respect to $\{H_{\omega}\}_{\omega\in \Omega}$.
\end{example}

Now, we state the definition of some common corresponding operators for a Bochner $pg$-frame.

\begin{definition}
Let $\{\Lambda_{\omega}\}_{\omega \in \Omega}$ be a Bochner $pg$-Bessel family for $X$ with respect to $\{H_{\omega}\}_{\omega \in \Omega}$ and $q$ be the conjugate exponent of $p$. We define the operators $T$ and $U$, by
\begin{eqnarray}\label{T}
T:L^{q}(\mu,\oplus_{\omega \in \Omega} H_{\omega})\longrightarrow X^{*}
\end{eqnarray}
\begin{align*}
<x,TG>=\int_{\Omega}<\Lambda_{\omega}(x),G(\omega)>d\mu(\omega),\quad  x\in X,~G\in L^{q}(\mu,\oplus_{\omega \in \Omega} H_{\omega}),
\end{align*}
\begin{eqnarray}\label{U}
U:X\longrightarrow L^{p}(\mu,\oplus_{\omega \in \Omega} H_{\omega})
\end{eqnarray}
\begin{align*}
<Ux,G>=\int_{\Omega}<\Lambda_{\omega}(x),G(\omega)>d\mu(\omega),\quad  x\in X,~G\in L^{q}(\mu,\oplus_{\omega \in \Omega} H_{\omega}).
\end{align*}
The operators $T$ and $U$ are called the synthesis and analysis operators of $\{\Lambda_{\omega}\}_{\omega \in \Omega}$, respectively.
\end{definition}

The following proposition shows these operators are bounded. It is analogous to Theorem 3.2.3 in \cite{gj91}.
\begin{proposition}\label{3}
Let $\{\Lambda_{\omega}\in B(X,H_{\omega}): \omega \in \Omega\}$ be a  Bochner $pg$-Bessel family for $X$ with respect to $\{H_{\omega}\}_{\omega \in \Omega}$ and with Bessel bound $B$. Then the operators $T$ and $U$ defined by (\ref{T}) and (\ref{U}), respectively, are well-defined and bounded with $\|T\|\leq B$ and $\|U\|\leq B$.
\end{proposition}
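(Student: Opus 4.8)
The plan is to handle the analysis operator $U$ first and then obtain the synthesis operator $T$ essentially as its adjoint, since the estimates for both reduce to a single application of Hölder's inequality against the Bessel bound. Write $K=\oplus_{\omega\in\Omega}H_{\omega}$ for brevity.

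First I would show that $U$ is well-defined. For $x\in X$ I would set $(Ux)(\omega):=\Lambda_{\omega}(x)$ and argue that this defines an element of $L^{p}(\mu,K)$: the measurability of $\omega\mapsto\Lambda_{\omega}(x)$ that is implicit in making sense of the integral in (\ref{df1}) shows $Ux$ is Bochner measurable, while the right-hand (Bessel) inequality in (\ref{df1}) gives $\|Ux\|_{p}=(\int_{\Omega}\|\Lambda_{\omega}(x)\|^{p}d\mu(\omega))^{1/p}\le B\|x\|$, so $Ux\in L^{p}(\mu,K)$. Linearity of $U$ is inherited from the linearity of each $\Lambda_{\omega}$, and the same estimate yields $\|U\|\le B$ at once. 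It then remains to check that this concrete $Ux$ is exactly the element singled out by the defining relation (\ref{U}); this is where Remark \ref{00} enters, since under the isometric identification $L^{p}(\mu,K)\cong(L^{q}(\mu,K))^{*}$ an element of $L^{p}(\mu,K)$ is determined by its pairing against every $G\in L^{q}(\mu,K)$, and for our $Ux$ this pairing is precisely $\int_{\Omega}\langle\Lambda_{\omega}(x),G(\omega)\rangle d\mu(\omega)$.

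For the synthesis operator $T$, I would fix $G\in L^{q}(\mu,K)$ and consider the map $x\mapsto\int_{\Omega}\langle\Lambda_{\omega}(x),G(\omega)\rangle d\mu(\omega)$. It is linear in $x$, and applying the Cauchy--Schwarz inequality for the pairing of Remark \ref{00} followed by Hölder's inequality with exponents $p,q$ gives
\[
\left|\int_{\Omega}\langle\Lambda_{\omega}(x),G(\omega)\rangle d\mu(\omega)\right|\le\int_{\Omega}\|\Lambda_{\omega}(x)\|\,\|G(\omega)\|\,d\mu(\omega)\le\|Ux\|_{p}\,\|G\|_{q}\le B\|x\|\,\|G\|_{q}.
\]
Hence the map is a bounded functional, so $TG\in X^{*}$ is well-defined with $\|TG\|_{X^{*}}\le B\|G\|_{q}$, and therefore $\|T\|\le B$. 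In fact the two defining relations (\ref{T}) and (\ref{U}) say exactly that $\langle x,TG\rangle=\langle Ux,G\rangle$ for all $x$ and $G$, i.e. $T=U^{*}$; so once $U$ is shown bounded with $\|U\|\le B$, the bound $\|T\|=\|U^{*}\|=\|U\|\le B$ follows automatically.

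I expect the only genuine obstacle to be the well-definedness of $U$, namely verifying that $\omega\mapsto\Lambda_{\omega}(x)$ is Bochner measurable and lands in $L^{p}(\mu,K)$, and that the abstract prescription (\ref{U}) is really represented by this function through the duality of Remark \ref{00}. The norm inequalities themselves are routine once the Cauchy--Schwarz inequality, Hölder's inequality, and the Bessel bound are in place.
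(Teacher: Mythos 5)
Your proof is correct, but it takes a genuinely different route from the paper's. The paper handles $T$ first, and the bulk of its argument is an explicit verification---by approximating both $\omega \mapsto \Lambda_{\omega}(x)$ and $G$ by simple functions---that the scalar integrand $\omega \mapsto \langle \Lambda_{\omega}(x), G(\omega)\rangle$ is measurable; only then does it apply pointwise Cauchy--Schwarz and H\"{o}lder to get $|\langle x, TG\rangle| \le B\|x\|\|G\|_{q}$, and it dispatches $U$ with ``by a similar discussion.'' You instead realize $U$ concretely: $Ux = \{\Lambda_{\omega}x\}_{\omega\in\Omega}$ is Bochner measurable by hypothesis and lies in $L^{p}(\mu,\oplus_{\omega\in\Omega}H_{\omega})$ directly by the Bessel inequality, and the duality of Remark \ref{00} both gives meaning to the integral pairing (hence the measurability and integrability of the scalar integrand, which you never prove by hand) and identifies this concrete function with the element demanded by (\ref{U}); $T$ then gets its bound either from the same H\"{o}lder estimate or from $\|T\|=\|U^{*}\|=\|U\|$. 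What your route buys: it is shorter, and it yields as by-products the formula $(Ux)(\omega)=\Lambda_{\omega}x$ a.e.\ and the identity $T=U^{*}$, which the paper proves separately as Proposition \ref{T*} and Lemma \ref{7}$(i)$. What the paper's route buys: it is self-contained on the measurability question, deriving it from the definition of Bochner measurability rather than outsourcing it to the duality theorem cited in Remark \ref{00}; this matters because your middle inequality $|\int_{\Omega}\langle \Lambda_{\omega}(x), G(\omega)\rangle \, d\mu(\omega)| \le \int_{\Omega}\|\Lambda_{\omega}(x)\|\,\|G(\omega)\|\, d\mu(\omega)$ is meaningless until one knows the left-hand integrand is measurable, a point your write-up covers only through that appeal. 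Both arguments are sound; just keep the appeal to Remark \ref{00} explicit at that step.
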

\begin{proof}
Suppose that $\{\Lambda_{\omega}\}_{\omega \in \Omega}$ is a Bochner $pg$-Bessel family with bound $B$ and $q$ is the conjugate exponent of $p$. We show that for all $x\in X$ and all $G\in L^{q}(\mu,\oplus_{\omega \in \Omega} H_{\omega})$, the mapping $\omega\longmapsto <\Lambda_{\omega}(x),G(\omega)>$ is measurable.  For all $x\in X$ and $G\in L^{q}(\mu,\oplus_{\omega \in \Omega} H_{\omega})$, $\omega \longmapsto \Lambda_{\omega}(x)$ and $G$ are Bochner measurable, so there are  sequences of simple functions $\{\lambda_{n}\}_{n=1}^{\infty}$ and $\{g_{n}\}_{n=1}^{\infty}$ such that
$$\lim_{n\rightarrow\infty}\|\lambda_{n}(\omega)-\Lambda_{\omega}(x)\|=0, \quad a.e.~[\mu],$$
$$\lim_{n\rightarrow\infty}\|g_{n}(\omega)-G(\omega)\|=0, \quad a.e.~[\mu].$$
For each $n$, $<\lambda_{n},g_{n}>$ is a  simple function and
\begin{align*}
&|<\Lambda_{\omega}(x),G(\omega)>-<\lambda_{n}(\omega),g_{n}(\omega)>|
\\\leq&|<\Lambda_{\omega}(x)-\lambda_{n}(\omega),G(\omega)>|+|<\lambda_{n}(\omega),g_{n}(\omega)-G(\omega)>|
\\\leq&\|\Lambda_{\omega}(x)-\lambda_{n}(\omega)\|\|G(\omega)\|+\|\lambda_{n}(\omega)\|\|g_{n}(\omega)-G(\omega)\|.
\end{align*}
So $$\lim_{n\rightarrow\infty}|<\Lambda_{\omega}(x),G(\omega)>-<\lambda_{n}(\omega),g_{n}(\omega)>|=0$$ and $\omega\longmapsto <\Lambda_{\omega}(x),G(\omega)>$ is measurable.
\\For each $x\in X$ and $G\in L^{q}(\mu,\oplus_{\omega \in \Omega} H_{\omega})$, we have
\begin{align*}
|<x,TG>|=&|\int_{\Omega}<\Lambda_{\omega}(x),G(\omega)>d\mu(\omega)|
\\\leq& \int_{\Omega}\|\Lambda_{\omega}x\|\|G(\omega)\|d\mu(\omega)
\\\leq& (\int_{\Omega}\|\Lambda_{\omega}x\|^{p}d\mu(\omega))^{\frac{1}{p}}
(\int_{\Omega}\|G(\omega)\|^{q}d\mu(\omega))^{\frac{1}{q}}
\\\leq&B\|x\|\|G\|_{q}.
\end{align*}
Thus $T$ is well-defined and $\|T\|\leq B$. By a similar discussion, $U$ is well-defined and $\|U\|\leq B$.
\end{proof}

The following proposition provides us with a concrete formula for the analysis operator.
\begin{proposition}\label{T*}
If $\{\Lambda_{\omega}\}_{\omega \in \Omega}$ is a Bochner $pg$-Bessel family for $X$ with respect to $\{H_{\omega}\}_{\omega \in \Omega}$ then for all $x\in X$, $(Ux)(\omega)=\Lambda_{\omega}x,\,~ a.e.~[\mu]$.
\end{proposition}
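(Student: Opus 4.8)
The plan is to recognize that the weak definition (\ref{U}) pins down $Ux$ uniquely as an element of $L^{p}(\mu,\oplus_{\omega\in\Omega}H_{\omega})$, to exhibit the concrete candidate $F_{x}\colon\omega\mapsto\Lambda_{\omega}x$, and to show that it represents the same functional, so that the two must coincide. First I would verify that $F_{x}$ genuinely belongs to $L^{p}(\mu,\oplus_{\omega\in\Omega}H_{\omega})$: Bochner measurability of $\omega\mapsto\Lambda_{\omega}x$ is hypothesis $(i)$ of the definition, while finiteness of the $p$-norm is immediate from the Bessel bound, since
$$\|F_{x}\|_{p}=\Big(\int_{\Omega}\|\Lambda_{\omega}x\|^{p}\,d\mu(\omega)\Big)^{\frac{1}{p}}\leq B\|x\|<\infty.$$
In particular the integral pairing of Remark \ref{00} applies to $F_{x}$.

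Next I would pair $F_{x}$ against an arbitrary $G\in L^{q}(\mu,\oplus_{\omega\in\Omega}H_{\omega})$. Using the duality formula of Remark \ref{00} and then the defining identity (\ref{U}),
$$\langle F_{x},G\rangle=\int_{\Omega}\langle\Lambda_{\omega}x,G(\omega)\rangle\,d\mu(\omega)=\langle Ux,G\rangle,$$
where measurability of the integrand is exactly the fact established at the beginning of the proof of Proposition \ref{3}. Thus $F_{x}$ and $Ux$ induce the same bounded linear functional on $L^{q}(\mu,\oplus_{\omega\in\Omega}H_{\omega})$.

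Finally I would invoke the duality $\big(L^{q}(\mu,\oplus_{\omega\in\Omega}H_{\omega})\big)^{*}\cong L^{p}(\mu,\oplus_{\omega\in\Omega}H_{\omega})$ to conclude. Since $F_{x}$ and $Ux$ both lie in $L^{p}$ and agree on the entire dual space, and the dual separates points, they are equal as elements of $L^{p}$; equality of these equivalence classes is precisely the assertion $(Ux)(\omega)=\Lambda_{\omega}x$ for a.e.\ $\omega$. The one point needing care is the applicability of this duality, which is supplied by Remark \ref{00}: because $K=\oplus_{\omega\in\Omega}H_{\omega}$ is a Hilbert space, $K^{*}=K$ has the Radon-Nikodym property, so $(L^{q}(\mu,K))^{*}=L^{p}(\mu,K)$. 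The remaining steps are routine.
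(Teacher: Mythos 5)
Your proposal is correct and takes essentially the same route as the paper: both show that $Ux$ and $\omega\mapsto\Lambda_{\omega}x$ induce the same functional when paired against every $G\in L^{q}(\mu,\oplus_{\omega\in\Omega}H_{\omega})$, and then use the duality of Remark \ref{00} to conclude the difference vanishes in $L^{p}$. The only cosmetic difference is that the paper makes this last step explicit by choosing a norming $G$ with $\|G\|_{q}=1$ attaining $\|Ux-\{\Lambda_{\omega}x\}\|_{p}$, whereas you phrase it as the dual separating points; these are the same Hahn--Banach argument.
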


\begin{proof}
Let $q$ be the conjugate exponent of $p$ and $x\in X$. For all  $G\in L^{q}(\mu,\oplus_{\omega \in \Omega} H_{\omega})$, we have
\begin{eqnarray*}
<Ux,G>&=&{\int_{\Omega}<\Lambda_{\omega}(x),G(\omega)>d\mu(\omega)}
\\&=&<\{\Lambda_{\omega}x\}_{\omega \in \Omega},G>.
\end{eqnarray*}
So $<Ux-\{\Lambda_{\omega}x\}_{\omega \in \Omega},G>=0$, for all $G\in L^{q}(\mu,\oplus_{\omega \in \Omega} H_{\omega})$.
There exists $G\in L^{q}(\mu,\oplus_{\omega \in \Omega} H_{\omega})$ such that $\|G\|_{q}=1$ and $$<Ux-\{\Lambda_{\omega}x\}_{\omega \in \Omega},G>=\|Ux-\{\Lambda_{\omega}x\}_{\omega \in \Omega}\|_{p},$$
which implies $\|Ux-\{\Lambda_{\omega}x\}_{\omega \in \Omega}\|_{p}=0$. Therefore $(Ux)(\omega)=\Lambda_{\omega}x,\,~ a.e.~[\mu]$.
\end{proof}

The following proposition shows that it is enough to check the Bochner $pg$-frame conditions on a dense subset. The discrete version of this proposition is available in \cite[Lemma 5.1.7]{gj91}.

\begin{proposition}
Suppose that $(\Omega,\Sigma,\mu)$ is a measure space where $\mu$ is $\sigma$-finite. Let $\{\Lambda_{\omega}\in B(X,H_{\omega}): \omega \in \Omega\}$ be a family such that for each $x\in X$, $\omega \longmapsto \Lambda_{\omega}(x)$ is Bochner measurable and assume that there exist positive constants $A$ and $B$ such that (\ref{df1}) holds for all $x$ in a dense subset $V$ of $X$. Then $\{\Lambda_{\omega}\}_{\omega \in \Omega}$ is a Bochner $pg$-frame for $X$ with respect to $\{H_{\omega}\}_{\omega\in \Omega}$  with  bounds $A$ and $B$.
\end{proposition}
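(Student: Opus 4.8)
The plan is to pass both inequalities of (\ref{df1}) from the dense set $V$ to all of $X$ by a density argument, obtaining the upper (Bessel) bound through Fatou's lemma and the lower bound through a continuity argument. Write $q$ for the conjugate exponent of $p$; the measurability requirement $(i)$ is assumed for every $x\in X$, so only the two-sided estimate needs attention.

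\textbf{Upper bound.} Fix $x\in X$ and choose $x_n\in V$ with $x_n\to x$ in $X$. Since each $\Lambda_\omega\in B(X,H_\omega)$ is bounded, $\Lambda_\omega x_n\to\Lambda_\omega x$ in $H_\omega$ for every $\omega$, and hence $\|\Lambda_\omega x_n\|^p\to\|\Lambda_\omega x\|^p$ pointwise in $\omega$. The map $\omega\mapsto\|\Lambda_\omega x\|^p$ is measurable by $(i)$, so Fatou's lemma together with the upper estimate on $V$ gives $\int_\Omega\|\Lambda_\omega x\|^p\,d\mu(\omega)\le\liminf_n\int_\Omega\|\Lambda_\omega x_n\|^p\,d\mu(\omega)\le\liminf_n(B\|x_n\|)^p=(B\|x\|)^p$. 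Thus the right-hand inequality of (\ref{df1}) holds for every $x\in X$; in particular $\{\Lambda_\omega x\}_{\omega\in\Omega}\in L^p(\mu,\oplus_{\omega\in\Omega}H_\omega)$ and $\{\Lambda_\omega\}_{\omega\in\Omega}$ is a Bochner $pg$-Bessel family with bound $B$.

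\textbf{Lower bound.} Now that the Bessel estimate holds on all of $X$, applying it to the difference $x-y$ yields $\big\|\{\Lambda_\omega x\}-\{\Lambda_\omega y\}\big\|_p=\big\|\{\Lambda_\omega(x-y)\}\big\|_p\le B\|x-y\|$, so $x\mapsto\{\Lambda_\omega x\}$ is Lipschitz from $X$ into $L^p(\mu,\oplus_\omega H_\omega)$ and therefore $x\mapsto\big(\int_\Omega\|\Lambda_\omega x\|^p\,d\mu(\omega)\big)^{1/p}=\|\{\Lambda_\omega x\}\|_p$ is continuous on $X$. This function dominates $A\|\cdot\|$ on the dense set $V$; taking $x_n\in V$ with $x_n\to x$ and letting $n\to\infty$ gives $\|\{\Lambda_\omega x\}\|_p=\lim_n\|\{\Lambda_\omega x_n\}\|_p\ge\lim_n A\|x_n\|=A\|x\|$. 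Hence the left-hand inequality of (\ref{df1}) holds for all $x\in X$, and combined with $(i)$ this shows $\{\Lambda_\omega\}_{\omega\in\Omega}$ is a Bochner $pg$-frame with bounds $A$ and $B$.

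The only point needing care is the interchange of limit and integral in the upper-bound step; it is handled cleanly by Fatou's lemma once the pointwise convergence $\Lambda_\omega x_n\to\Lambda_\omega x$ is noted, which is immediate from the boundedness of the individual operators $\Lambda_\omega$. An alternative route shows that $\{\Lambda_\omega x_n\}_n$ is Cauchy in $L^p$ and identifies its limit with $\{\Lambda_\omega x\}$ via an a.e.-convergent subsequence; this is where the $\sigma$-finiteness hypothesis and the analysis-operator machinery of Propositions \ref{3} and \ref{T*} would naturally come into play. Either way the argument is routine, so I anticipate no substantial obstacle beyond bookkeeping.
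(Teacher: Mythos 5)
Your proof is correct, and it takes a genuinely different and more economical route than the paper's. For the upper bound the paper argues by contradiction: it uses $\sigma$-finiteness to decompose $\Omega$ into the finite-measure pieces $\Delta_{m}^{x}\cap\Omega_{n}$, extracts a finite subfamily over which $\int_{\Omega}\|\Lambda_{\omega}x\|^{p}d\mu$ already exceeds $B^{p}\|x\|^{p}$, and then passes to the limit along $x_{k}\in V$, $x_{k}\to x$, citing dominated convergence; you obtain the same inequality in one step from Fatou's lemma, which needs neither a dominating function nor $\sigma$-finiteness. For the lower bound the paper proves $\int_{\Omega}\|\Lambda_{\omega}x_{k}\|^{p}d\mu\to\int_{\Omega}\|\Lambda_{\omega}x\|^{p}d\mu$ by pairing $Ux_{k}$ against the explicitly constructed functions $G_{k}(\omega)=\|\Lambda_{\omega}x_{k}\|^{\frac{p-q}{q}}\Lambda_{\omega}x_{k}$ and showing $G_{k}\to G$ in $L^{q}(\mu,\oplus_{\omega\in\Omega}H_{\omega})$; you instead observe that the Bessel bound, once extended to all of $X$, makes $x\mapsto\{\Lambda_{\omega}x\}_{\omega\in\Omega}$ a $B$-Lipschitz linear map into $L^{p}(\mu,\oplus_{\omega\in\Omega}H_{\omega})$, so that $x\mapsto\|\{\Lambda_{\omega}x\}\|_{p}$ is continuous and the inequality against $A\|\cdot\|$ passes from $V$ to $X$ by density. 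Your route buys two things. First, it never uses $\sigma$-finiteness, so it establishes the proposition over an arbitrary measure space; that hypothesis could be dropped from the statement. Second, it avoids the delicate limit interchanges in the paper's write-up: in the paper's $L^{q}$-convergence step the proposed dominating function $2^{q-1}(\|G_{k}(\omega)\|^{q}+\|G(\omega)\|^{q})$ varies with $k$, so the standard dominated convergence theorem does not literally apply --- one needs a generalized version whose extra hypothesis, $\int_{\Omega}\|G_{k}(\omega)\|^{q}d\mu\to\int_{\Omega}\|G(\omega)\|^{q}d\mu$, is essentially the convergence being proved, or else exactly the continuity-of-the-analysis-map observation that you make directly. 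What the paper's approach buys in exchange is only stylistic consistency with the rest of the paper (the same layered decomposition and the same functions $G_{k}$ reappear in Lemma \ref{4}); as a proof of this proposition, yours is shorter and more robust.
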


\begin{proof}
Let $\{\Omega_{n}\}_{n=1}^\infty$ be a family of disjoint measurable subsets of $\Omega$ such that $\Omega=\bigcup_{n=1}^{\infty}\Omega_{n}$ with $\mu(\Omega_{n})<\infty$ for each $n\geq1$. Let $x\in X$ and assume without loss of generality $\|\Lambda_{\omega}x\|\neq 0,\,\omega\in\Omega$. Let
$$\Delta_{m}^{x}=\{\omega \in \Omega~|~m-1<\|\Lambda_{\omega}x\|\leq m\},\quad m=0,1,2,...~. $$
It is clear that for each $m=0,1,2,...$~, $\Delta_{m}^{x}\subseteq\Omega$ is measurable and $\Omega=\bigcup_{m=0,n=1}^{\infty}(\Delta_{m}^{x}\cap\Omega_{n})$, where $\{\Delta_{m}^{x}\cap\Omega_{n}\}_{n=1,m=1}^{\infty\quad\infty}$ is a family of disjoint and measurable subsets of $\Omega$. If $\{\Lambda_{\omega}\}_{\omega \in \Omega}$ is not a Bochner $pg$-Bessel family for $X$ then there exists $x\in X$ such that
$$(\int_{\Omega}\|\Lambda_{\omega}(x)\|^{p}d\mu(\omega))^{\frac{1}{p}}> B\|x\|.$$
So
$$\sum_{m,n}\int_{\Delta_{m}^{x}\cap\Omega_{n}}\|\Lambda_{\omega}(x)\|^{p}d\mu(\omega)>B^{p}\|x\|^{p}$$
and there exist finite sets $I$ and $J$ such that
\begin{eqnarray}\label{dense}
\sum_{m\in I}\sum_{n\in J}\int_{\Delta_{m}^{x}\cap\Omega_{n}}\|\Lambda_{\omega}(x)\|^{p}d\mu(\omega)>B^{p}\|x\|^{p}.
\end{eqnarray}
Let $\{x_{k}\}_{k=1}^{\infty}$ be a sequence in $V$ such that $x_{k}\rightarrow x$ as $k\rightarrow\infty$. The assumption implies that
$$\sum_{m\in I}\sum_{n\in J}\int_{\Delta_{m}^{x}\cap\Omega_{n}}\|\Lambda_{\omega}(x_{k})\|^{p}d\mu(\omega)\leq B^{p}\|x_{k}\|^{p},$$
which is a contradiction to (\ref{dense})(by the Lebesgue's Dominated Convergence Theorem). So $\{\Lambda_{\omega}\}_{\omega \in \Omega}$ is a $pg$-Bessel family for $X$ with respect to $\{H_{\omega}\}_{\omega\in \Omega}$ and Bessel bound $B$.
Now, we show that
$$(\int_{\Omega}\|\Lambda_{\omega}(x_{k})\|^{p}d\mu(\omega))^{\frac{1}{p}}\rightarrow (\int_{\Omega}\|\Lambda_{\omega}(x)\|^{p}d\mu(\omega))^{\frac{1}{p}}.$$
Since $\{\Lambda_{\omega}\}_{\omega \in \Omega}$ is a $pg$-Bessel family for $X$, the operator $U$ defined by (\ref{U}) is well defined and bounded.
Assume that $q$ is the conjugate exponent of $p$ and let
$$G:\Omega\longrightarrow \oplus_{\omega \in \Omega} H_{\omega},\quad G(\omega)=\|\Lambda_{\omega}x\|^{\frac{p-q}{q}}\Lambda_{\omega}x,\,\omega \in \Omega,$$
and
$$G_{k}:\Omega\longrightarrow \oplus_{\omega \in \Omega} H_{\omega},\quad G_{k}(\omega)=\|\Lambda_{\omega}x_{k}\|^{\frac{p-q}{q}}\Lambda_{\omega}x_{k},\,\omega \in \Omega.$$
It is obvious that for each $k\in \mathbb{N}$, $G_{k}$ and $G$ belong to $L^{q}(\mu,\oplus_{\omega \in \Omega} H_{\omega})$, and we have
$$<Ux,G>=\int_{\Omega}<\Lambda_{\omega}(x),G(\omega)>d\mu(\omega)=\int_{\Omega}\|\Lambda_{\omega}(x)\|^{p}d\mu(\omega),$$
$$<Ux_{k},G_{k}>=\int_{\Omega}<\Lambda_{\omega}(x_{k}),G_{k}(\omega)>d\mu(\omega)=\int_{\Omega}\|\Lambda_{\omega}(x_{k})\|^{p}d\mu(\omega).$$
Since $\lim_{k\rightarrow \infty}\|(G_{k}-G)(\omega)\|^{q}=0 $ and
$$\|(G_{k}-G)(\omega)\|^{q}\leq (\|G_{k}(\omega)\|+\|G(\omega)\|)^{q}\leq 2^{q-1}(\|G_{k}(\omega)\|^{q}+\|G(\omega)\|^{q}),$$
so by  the Lebesgue's Dominated Convergence Theorem
$$\lim_{k\rightarrow \infty}\int_{\Omega}\|(G_{k}-G)(\omega)\|^{q}d\mu(\omega)=0.$$
Therefore, $\lim_{k\rightarrow \infty}\|G_{k}-G\|_{q}=0$, hence
\begin{align*}
&|\int_{\Omega}\|\Lambda_{\omega}(x_{k})\|^{p}d\mu(\omega)-\int_{\Omega}\|\Lambda_{\omega}(x)\|^{p}d\mu(\omega)|
\\=&|<Ux_{k},G_{k}>-<Ux,G>|
\\\leq&|\int_{\Omega}<\Lambda_{\omega}(x_{k}-x),G_{k}(\omega)>d\mu(\omega)|
\\+&|\int_{\Omega}<\Lambda_{\omega}(x),(G_{k}-G)(\omega)>d\mu(\omega)|\\
\leq&(\int_{\Omega}\|\Lambda_{\omega}(x_{k}-x)\|^{p}d\mu(\omega))^{\frac{1}{p}}(\int_{\Omega}\|G_{k}(\omega)\|^{q}d\mu(\omega))^{\frac{1}{q}}\\
+&(\int_{\Omega}\|\Lambda_{\omega}(x)\|^{p}d\mu(\omega))^{\frac{1}{p}}(\int_{\Omega}\|(G-G_{k})(\omega)\|^{q}d\mu(\omega))^{\frac{1}{q}}\\
\leq& B\|x_{k}-x\|\|G_{k}\|_{q}+B\|x\||\|G_{k}-G\|_{q}.
\end{align*}
By letting $x_{k}\rightarrow x$, the proof is completed.
\end{proof}

\subsection{Characterization of Bochner $pg$-frames}
Now we give some characterizations of Bochner $pg$-frames in terms of their corresponding operators.

At first, we show the next lemma that  is very useful in case of complex valued $L^p$-spaces.

\begin{lemma}\label{4}
Let $(\Omega,\Sigma,\mu)$ be a measure space where $\mu$ is $\sigma$-finite. Let $1< p<\infty$ and $q$ be its conjugate exponent. If $F:\Omega\longrightarrow H$ is Bochner measurable and for each $G\in L^{q}(\mu,H)$, $|\int_{\Omega}<F(\omega),G(\omega)>_{H}d\mu(\omega)|<\infty$, then $F\in L^{p}(\mu,H)$.
\end{lemma}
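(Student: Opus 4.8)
The plan is to reduce the claim to the uniform boundedness principle, using the $\sigma$-finiteness of $\mu$ to truncate $F$ into genuinely $L^p$ pieces. First I would choose an increasing sequence $\{\Omega_n\}_{n=1}^{\infty}$ of measurable sets with $\mu(\Omega_n)<\infty$ and $\Omega=\bigcup_n\Omega_n$, put $E_n=\{\omega:\|F(\omega)\|\le n\}\cap\Omega_n$, and set $F_n=F\chi_{E_n}$. Each $F_n$ is bounded and supported on a set of finite measure, hence $F_n\in L^{p}(\mu,H)$; moreover $E_n\uparrow\Omega$, so by the Monotone Convergence Theorem $\int_\Omega\|F_n(\omega)\|^{p}\,d\mu(\omega)\uparrow\int_\Omega\|F(\omega)\|^{p}\,d\mu(\omega)$. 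Consequently it suffices to bound the numbers $\|F_n\|_p$ uniformly in $n$.

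For each $n$ consider the linear functional $\phi_n$ on the Banach space $L^{q}(\mu,H)$ defined by $\phi_n(G)=\int_\Omega\langle F_n(\omega),G(\omega)\rangle\,d\mu(\omega)$. Since $F_n\in L^{p}$, H\"older's inequality gives $|\phi_n(G)|\le\|F_n\|_p\|G\|_q$, so $\phi_n$ is bounded, and testing against a suitable normalization of $\|F_n\|^{p-2}F_n$ shows $\|\phi_n\|=\|F_n\|_p$. The key observation is that $\{\phi_n\}$ is pointwise bounded: for a fixed $G\in L^{q}(\mu,H)$ the hypothesis forces $\langle F(\cdot),G(\cdot)\rangle\in L^{1}(\mu)$, and since $E_n\subseteq\Omega$ we get $|\phi_n(G)|\le\int_\Omega|\langle F(\omega),G(\omega)\rangle|\,d\mu(\omega)$, a bound independent of $n$. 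The uniform boundedness principle therefore yields $\sup_n\|F_n\|_p=\sup_n\|\phi_n\|<\infty$, and letting $n\to\infty$ gives $\int_\Omega\|F\|^{p}\,d\mu<\infty$, i.e.\ $F\in L^{p}(\mu,H)$.

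I expect the main obstacle to be precisely this passage from the hypothesis to a norm bound. One cannot simply argue that the limiting functional $G\mapsto\int_\Omega\langle F,G\rangle\,d\mu$ is continuous by a closed graph argument, because the natural dominating function $\|F\|\,\|G\|$ is not yet known to be integrable---that is essentially the conclusion being sought. The truncation is what keeps each $\phi_n$ genuinely bounded while preserving the uniform pointwise control coming from the hypothesis. If one prefers to avoid duality entirely, there is a self-contained variant by contradiction: assuming $F\notin L^{p}$, use the exhaustion to split $\Omega$ into disjoint sets $D_k$ with $d_k:=\int_{D_k}\|F\|^{p}\ge 1$, and define $G=\sum_k b_k\|F\|^{p-2}F\chi_{D_k}$ with $b_k=(k\,d_k)^{-1}$. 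Since $(p-1)q=p$, one computes $\|G\|_q^{q}=\sum_k b_k^{q}d_k=\sum_k k^{-q}d_k^{\,1-q}\le\sum_k k^{-q}<\infty$, so $G\in L^{q}(\mu,H)$, whereas $\int_\Omega\langle F,G\rangle\,d\mu=\sum_k b_k d_k=\sum_k k^{-1}=\infty$, contradicting the hypothesis and forcing $F\in L^{p}(\mu,H)$.
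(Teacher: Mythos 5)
Your main argument is correct, and it takes a genuinely different route from the paper's. The paper argues by contradiction with an explicit gliding-hump construction: it decomposes $\Omega$ into the sets $\Delta_m\cap\Omega_n$ (level sets $m-1<\|F(\omega)\|\leq m$ intersected with a $\sigma$-finite exhaustion), each of finite mass $\int\|F\|^p\,d\mu$; if the total mass were infinite, it groups these pieces into disjoint finite blocks of mass $>1$ and builds a concrete $G\in L^{q}(\mu,H)$, supported on the blocks and proportional there to $\|F\|^{(p-q)/q}F=\|F\|^{p-2}F$, for which $\int_\Omega\langle F,G\rangle\,d\mu=\infty$, contradicting the hypothesis. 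Your fallback variant in the last paragraph is essentially this very proof: your $D_k$ and weights $b_k=(k\,d_k)^{-1}$ play the role of the paper's blocks $E_k$ and constants $c_k^{p/q}=k^{-1}d_k^{-1/q}$, the different normalization being immaterial. Your primary proof, by contrast, truncates $F$ to $F_n=F\chi_{E_n}\in L^{p}(\mu,H)$, identifies each $F_n$ with a functional $\phi_n$ on $L^{q}(\mu,H)$ of norm $\|F_n\|_p$, obtains pointwise boundedness of $\{\phi_n\}$ from the hypothesis, and concludes by Banach--Steinhaus plus monotone convergence; this is softer and shorter, with the gliding hump hidden inside the proof of the uniform boundedness principle. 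The one delicate point is the passage you correctly flag: your route needs the hypothesis to yield $\langle F(\cdot),G(\cdot)\rangle\in L^{1}(\mu)$ for \emph{every} $G$, which is indeed the only coherent reading (a finite Lebesgue integral of a scalar-valued function presupposes integrability), whereas the paper uses the hypothesis only for a $G$ that makes the integrand non-negative, so that finiteness is meaningful without prior integrability. Both routes exploit $\sigma$-finiteness together with truncation by level sets of $\|F\|$ in the same essential way (finiteness of $\mu(\Omega_n)$ alone would not put $F\chi_{\Omega_n}$ in $L^{p}$), and both are complete proofs.
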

\begin{proof}
Let $\{\Omega_{n}\}_{n=1}^\infty$ be a family of disjoint measurable subsets of $\Omega$ such that for each $n\geq1$, $\mu(\Omega_{n})<\infty$ and $\Omega=\bigcup_{n=1}^{\infty}\Omega_{n}$. Without loss of generality we can assume $\|F(\omega)\|\neq 0,\,\omega\in\Omega$. Let
$$\Delta_{m}=\{\omega \in \Omega~|~m-1<\|F(\omega)\|\leq m\},\quad m=0,1,2,...~.$$
It is clear that for each $m=0,1,2,...$~, $\Delta_{m}\subseteq\Omega$ is measurable and $\Omega=\bigcup_{m=0,n=1}^{\infty}(\Delta_{m}\cap\Omega_{n})$, where $\{\Delta_{m}\cap\Omega_{n}\}_{n=1,m=1}^{\infty\quad\infty}$ is a family of disjoint and measurable subsets of $\Omega$. We have
$$ \int_{\Omega}\|F(\omega)\|^{p}d\mu(\omega)=
\sum_{m=0}^{\infty}\sum_{n=1}^{\infty}\int_{\Delta_{m}\cap\Omega_{n}}\|F(\omega)\|^{p}d\mu(\omega)$$
and
$$\int_{\Delta_{m}\cap\Omega_{n}}\|F(\omega)\|^{p}d\mu(\omega)\leq m^{p}\mu(\Omega_{n})<\infty. $$
Suppose that $\int_{\Omega}\|F(\omega)\|^{p}d\mu(\omega)=\infty$, then there exists a family $\{E_{k}\}_{k=1}^{\infty}$ of disjoint finite subsets of $\mathbb{N}_{0}\times\mathbb{N}$ such that
$$\sum_{(m,n)\in E_{k}}\int_{\Delta_{m}\cap\Omega_{n}}\|F(\omega)\|^{p}d\mu(\omega)>1.$$
Let $E=\bigcup_{k=1}^{\infty}\bigcup_{(m,n) \in E_{k}}(\Delta_{m}\cap\Omega_{n})$.
Consider $G:\Omega\longrightarrow H$ defined by
$$G(\omega)=\left\{
     \begin{array}{ll}
       c_{k}^{\frac{p}{q}}\|F(\omega)\|^{\frac{p-q}{q}}F(\omega) & if ~\hbox{$\omega\in \bigcup_{(m,n) \in E_{k}}(\Delta_{m}\cap\Omega_{n}),~k=1,2,...$} \\
       0 & if~ \hbox{$\omega\in \Omega\setminus E$}
     \end{array}
   \right.,$$
where
$$c_{k}:=\frac{1}{k^{\frac{q}{p}}}(\int_{\bigcup_{(m,n) \in E_{k}}(\Delta_{m}\cap\Omega_{n})}\|F(\omega)\|^{p}d\mu(\omega))^{-\frac{1}{p}}.$$
Then $G$ is Bochner measurable, and
\begin{align*}
\int_{\Omega}\|G(\omega)\|^{q}d\mu(\omega)=&\int_{E}\|G(\omega)\|^{q}d\mu(\omega)
\\=&\sum_{K=1}^{\infty}\sum_{(m,n)\in E_{k}}\int_{\Delta_{m}\cap\Omega_{n}}\|G(\omega)\|^{q}d\mu(\omega)
\\=&\sum_{K=1}^{\infty}\int_{\bigcup_{(m,n) \in E_{k}}(\Delta_{m}\cap\Omega_{n})}c_{k}^{p}\|F(\omega)\|^{p}d\mu(\omega)
\\=&\sum_{K=1}^{\infty}\frac{1}{k^{q}}<\infty.
\end{align*}
Therefore $G\in L^{q}(\mu,H)$. But
\begin{align*}
|\int_{\Omega}<F(\omega),G(\omega)>_{H}d\mu(\omega)|
=&\sum_{K=1}^{\infty}c_{k}^{\frac{p}{q}}\int_{\bigcup_{(m,n) \in E_{k}}(\Delta_{m}\cap\Omega_{n})}\|F(\omega)\|^{p}d\mu(\omega)
\\=&\sum_{K=1}^{\infty}\frac{1}{k}(\int_{\bigcup_{(m,n) \in E_{k}}(\Delta_{m}\cap\Omega_{n})}\|F(\omega)\|^{p}d\mu(\omega))^{\frac{1}{p}}
\\>&\sum_{K=1}^{\infty}\frac{1}{k}=\infty,
\end{align*}
which is a contradiction.
\end{proof}

The following theorem characterizes Bochner $pg$-Bessel families by operator $T$ defined by (\ref{T}).

\begin{theorem}\label{5}
Suppose that $(\Omega,\Sigma,\mu)$ is a  measure space where $\mu$ is $\sigma$-finite. Let $\{\Lambda_{\omega}\in B(X,H_{\omega}): \omega \in \Omega\}$ be a family such that for each $x\in X$ the mapping $\omega\longmapsto \Lambda_{\omega}(x)$ is Bochner measurable. If  the operator $T$ defined by (\ref{T}) is well-defined and bounded, then $\{\Lambda_{\omega}\}_{\omega \in \Omega}$ is a Bochner $pg$-Bessel family for $X$ with respect to $\{H_{\omega}\}_{\omega\in \Omega}$ with Bessel bound $\|T\|$.
\end{theorem}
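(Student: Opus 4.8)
The plan is to fix an arbitrary $x \in X$ and study the single $K$-valued function $F(\omega) := \Lambda_\omega x$, where $K = \oplus_{\omega \in \Omega} H_\omega$, so that the desired Bessel inequality for $x$ becomes precisely the assertion $\|F\|_p \le \|T\|\,\|x\|$. By hypothesis $\omega \mapsto \Lambda_\omega x$ is Bochner measurable, so $F$ is a Bochner measurable function into the Hilbert space $K$. The subtle point is that we do not yet know $F$ lies in $L^p(\mu, K)$, and securing this membership is where I expect the real work to lie.

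First I would verify the integrability hypothesis of Lemma \ref{4}. For any $G \in L^q(\mu, K)$ the integrand $\omega \mapsto \langle F(\omega), G(\omega)\rangle$ is measurable, by the same simple-function argument used in the proof of Proposition \ref{3}, and by the defining identity of $T$ we have $\int_\Omega \langle F(\omega), G(\omega)\rangle\, d\mu(\omega) = \langle x, TG\rangle$. Since $T$ is well-defined and bounded, $TG \in X^*$ and hence $|\langle x, TG\rangle| \le \|x\|\,\|TG\| \le \|T\|\,\|x\|\,\|G\|_q < \infty$. Thus the hypothesis of Lemma \ref{4} is met, and applying it with the role of $H$ played by $K$ yields $F \in L^p(\mu, K)$; in other words $\int_\Omega \|\Lambda_\omega x\|^p\, d\mu(\omega) < \infty$.

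Once $F \in L^p(\mu, K)$ is known, the bound follows by duality. By Remark \ref{00}, since $K$ is a Hilbert space, $(L^p(\mu, K))^*$ is isometrically isomorphic to $L^q(\mu, K)$ under the pairing $\langle f, g\rangle = \int_\Omega \langle f(\omega), g(\omega)\rangle\, d\mu(\omega)$, so the norm of $F$ is recovered as $\|F\|_p = \sup\{\,|\langle F, G\rangle| : G \in L^q(\mu, K),\ \|G\|_q \le 1\,\}$. For each such $G$ we again have $\langle F, G\rangle = \langle x, TG\rangle$, whence $|\langle F, G\rangle| \le \|T\|\,\|x\|\,\|G\|_q \le \|T\|\,\|x\|$. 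Taking the supremum gives $\big(\int_\Omega \|\Lambda_\omega x\|^p\, d\mu(\omega)\big)^{1/p} = \|F\|_p \le \|T\|\,\|x\|$, which is exactly the Bochner $pg$-Bessel inequality with bound $\|T\|$; as $x$ was arbitrary, this finishes the argument.

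The main obstacle is the very first step. Before $F$ is known to belong to $L^p$, one cannot simply invoke the duality norm formula, since that formula presupposes $L^p$-membership. Lemma \ref{4} is precisely the tool that upgrades the scalar finiteness of all the pairings $\langle F, G\rangle$ (handed to us for free by the boundedness of $T$) into genuine $L^p$-integrability of $F$; everything after that reduces to the routine duality estimate above.
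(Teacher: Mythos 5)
Your proof is correct and follows essentially the same route as the paper's: fix $x$, use the boundedness of $T$ to see that all pairings $\int_{\Omega}<\Lambda_{\omega}x,G(\omega)>d\mu(\omega)=<x,TG>$ are finite, invoke Lemma \ref{4} to place $\{\Lambda_{\omega}x\}_{\omega\in\Omega}$ in $L^{p}(\mu,\oplus_{\omega\in\Omega}H_{\omega})$, and then recover the $L^{p}$-norm via the duality of Remark \ref{00} to get the bound $\|T\|\,\|x\|$. The only cosmetic difference is that the paper packages the pairings as a functional $F_{x}\in(L^{q}(\mu,\oplus_{\omega\in\Omega}H_{\omega}))^{*}$ and identifies it with $\{\Lambda_{\omega}x\}_{\omega\in\Omega}$ under the isometric isomorphism, while you apply the duality norm formula directly after securing $L^{p}$-membership.
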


\begin{proof}
Let $q$ be the conjugate exponent of $p$ and for $x\in X$, consider
$$F_{x}:L^{q}(\mu,\oplus_{\omega \in \Omega} H_{\omega})\longrightarrow \mathbb{C}$$
$$F_{x}(G)=<x,TG>=\int_{\Omega}<\Lambda_{\omega}(x),G(\omega)>d\mu(\omega),\quad G\in L^{q}(\mu,\oplus_{\omega \in \Omega} H_{\omega}).$$
Then $F_{x}\in (L^{q}(\mu,\oplus_{\omega \in \Omega} H_{\omega}))^{*}$. So $\{\Lambda_{\omega}x\}_{\omega \in \Omega} \in L^{p}(\mu,\oplus_{\omega \in \Omega} H_{\omega})$ by Lemma \ref{4}.
By Remark \ref{00}, $(L^{q}(\mu,\oplus_{\omega \in \Omega} H_{\omega}))^{*}$ and $L^{p}(\mu,\oplus_{\omega \in \Omega} H_{\omega})$ are isometrically isomorphic  and $\|\{\Lambda_{\omega}x\}_{\omega \in \Omega}\|_{p}=\|F_{x}\|$.   Therefore
$$(\int_{\Omega}\|\Lambda_{\omega}x\|^{p}d\mu(\omega))^{\frac{1}{p}}=\|F_{x}\|= \sup_{\|G\|_{q}=1}|<x,TG>|\leq \|T\|\|x\|.$$
\end{proof}

Similar to discrete frames, the analysis operator has closed range.
\begin{lemma}\label{5'}
Let  $\{\Lambda_{\omega}\}_{\omega \in \Omega}$ be a Bochner $pg$-frame for $X$ with respect to $\{H_{\omega}\}_{\omega\in \Omega}$. Then the operator $U$ defined by (\ref{U}) has closed range.
\end{lemma}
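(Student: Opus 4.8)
The plan is to reduce the statement to the standard fact that a bounded linear operator between Banach spaces which is bounded below necessarily has closed range. First I would invoke Proposition \ref{T*}, which identifies the analysis operator concretely as $(Ux)(\omega)=\Lambda_{\omega}x$ almost everywhere, so that
$\|Ux\|_{p}=\left(\int_{\Omega}\|\Lambda_{\omega}x\|^{p}d\mu(\omega)\right)^{1/p}$
for every $x\in X$. Reading the lower Bochner $pg$-frame inequality in (\ref{df1}) through this identification immediately gives $A\|x\|\leq\|Ux\|_{p}$ for all $x\in X$; that is, $U$ is bounded below, with the lower frame bound $A$ serving as the constant.

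Next I would show that being bounded below forces $R_{U}$ to be closed. Let $\{Ux_{n}\}_{n=1}^{\infty}$ be a sequence in $R_{U}$ that converges in $L^{p}(\mu,\oplus_{\omega\in\Omega}H_{\omega})$ to some element $y$. Then $\{Ux_{n}\}$ is Cauchy, and the lower bound yields $\|x_{n}-x_{m}\|\leq A^{-1}\|Ux_{n}-Ux_{m}\|_{p}$, so $\{x_{n}\}$ is Cauchy in $X$. Since $X$ is a Banach space it is complete, hence $x_{n}\to x$ for some $x\in X$. Because $U$ is bounded (Proposition \ref{3}) it is continuous, so $Ux_{n}\to Ux$; by uniqueness of limits $y=Ux\in R_{U}$. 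Therefore $R_{U}$ is closed.

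There is no serious obstacle here: the argument is essentially routine. The only point requiring care is the very first step, namely using Proposition \ref{T*} to rewrite $\|Ux\|_{p}$ as the Bochner $pg$-frame expression, since this is precisely what allows the lower frame bound to be interpreted as a lower bound for $U$. The other indispensable ingredient is the completeness of $X$, which is what upgrades the Cauchy sequence $\{x_{n}\}$ to a genuine limit in $X$ and thus places $y$ back inside $R_{U}$.
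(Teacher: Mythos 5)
Your proof is correct and follows essentially the same route as the paper: both use Proposition \ref{T*} to identify $\|Ux\|_{p}$ with the Bochner $pg$-frame expression, so the lower frame bound shows $U$ is bounded below, and hence has closed range. The only difference is that you spell out the standard Cauchy-sequence argument for why a bounded, bounded-below operator into a Banach space has closed range, which the paper simply invokes as known.
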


\begin{proof}
By assumption, there exist positive constants $A$ and $B$ such that
$$A\|x\|\leq (\int_{\Omega}\|\Lambda_{\omega}(x)\|^{p}d\mu(\omega))^{\frac{1}{p}}\leq B\|x\|,\quad x\in X.$$
By Proposition \ref{T*}, we have
$$A\|x\|\leq\|Ux\|_{p}\leq B\|x\|.$$
Hence $U$ is bounded below. Therefore $U$ has closed range.
\end{proof}

The next proposition shows that there is no Bochner $pg$-frames for a non-reflexive Banach spaces.
\begin{proposition}\label{6}
Let  $\{\Lambda_{\omega}\}_{\omega \in \Omega}$ be a Bochner $pg$-frame for $X$ with respect to $\{H_{\omega}\}_{\omega\in \Omega}$. Then $X$ is reflexive.
\end{proposition}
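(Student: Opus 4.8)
The plan is to realize $X$, via the analysis operator $U$, as a closed subspace of a reflexive Bochner space, and then to invoke the fact that closed subspaces of reflexive Banach spaces are reflexive.

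First I would record that $K:=\oplus_{\omega\in\Omega}H_{\omega}$ is a Hilbert space (see the remark following Lemma \ref{0}) and hence reflexive. Since $1<p<\infty$, the Bochner space $L^{p}(\mu,K)$ is then reflexive. This is the standard fact that $L^{p}(\mu,E)$ is reflexive whenever $1<p<\infty$ and $E$ is reflexive, and in the present situation it can be read off directly from Remark \ref{00}: because $K$ is a Hilbert space we have the isometric identifications $(L^{p}(\mu,K))^{*}\cong L^{q}(\mu,K)$ and $(L^{q}(\mu,K))^{*}\cong L^{p}(\mu,K)$, so that $(L^{p}(\mu,K))^{**}\cong (L^{q}(\mu,K))^{*}\cong L^{p}(\mu,K)$, and one checks that this identification is implemented by the canonical embedding.

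Next I would use that $U$ is bounded below. By Proposition \ref{T*} we have $(Ux)(\omega)=\Lambda_{\omega}x$ a.e., so the lower inequality in (\ref{df1}) yields $A\|x\|\leq\|Ux\|_{p}$ for every $x\in X$; this is exactly the estimate already exploited in Lemma \ref{5'}. Consequently $U$ is injective and has a bounded inverse on its range, i.e. $U\colon X\to R_{U}$ is a Banach-space isomorphism, and by Lemma \ref{5'} the range $R_{U}$ is a closed subspace of $L^{p}(\mu,K)$.

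Finally, a closed subspace of a reflexive Banach space is reflexive, so $R_{U}$ is reflexive; since reflexivity is preserved under isomorphism and $U\colon X\to R_{U}$ is an isomorphism, $X$ is reflexive. The only genuinely delicate point is the reflexivity of the Bochner space $L^{p}(\mu,K)$; once that is in hand, the conclusion follows immediately from the frame bounds together with the closed-range property of $U$ proved in Lemma \ref{5'}.
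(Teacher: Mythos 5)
Your proof is correct and follows essentially the same route as the paper's: both use the lower frame bound (via Proposition \ref{T*}) to see that $U$ is an isomorphism onto its range, invoke Lemma \ref{5'} for closedness of $R_{U}$, and conclude that $X$, being isomorphic to a closed subspace of the reflexive space $L^{p}(\mu,\oplus_{\omega\in\Omega}H_{\omega})$, is reflexive. The only difference is presentational: the paper cites Corollary 1.11.22 of \cite{Megginson} for the final step and simply asserts reflexivity of the Bochner space, whereas you justify that reflexivity explicitly from the duality identifications of Remark \ref{00}.
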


\begin{proof}
By Lemma \ref{5'}, $R_{U}$ is a closed subspace of $L^{p}(\mu,\oplus_{\omega \in \Omega} H_{\omega})$ and $U:X\longrightarrow R_{U}$ is homeomorphism.  Since
$L^{p}(\mu,\oplus_{\omega \in \Omega} H_{\omega})$ is reflexive, so $X$ is reflexive by Corollary 1.11.22 in \cite{Megginson}.
\end{proof}

In the following lemma we verify the adjoint operators of synthesis and analysis operators.
\begin{lemma}\label{7}
Suppose that $\{\Lambda_{\omega}\}_{\omega \in \Omega}$ is a Bochner $pg$-Bessel family for $X$ with respect to $\{H_{\omega}\}_{\omega\in \Omega}$ with   synthesis operator $T$ and   analysis operator $U$. Then
\\$(i)$ $U^{*}=T$.
\\$(ii)$ If  $\{\Lambda_{\omega}\}_{\omega \in \Omega}$ has the lower Bochner $pg$-frame condition, then $T^{*}J_1=\psi^{*}J_2U$, where $$J_1:X\longrightarrow X^{**}$$ and $$J_2:L^{p}(\mu,\oplus_{\omega \in \Omega} H_{\omega})\longrightarrow (L^{p}(\mu,\oplus_{\omega \in \Omega} H_{\omega}))^{**}$$ are canonical mappings and $\psi$ is the mentioned isometrically isomorphism in Remark \ref {00}.
\end{lemma}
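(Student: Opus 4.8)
The plan is to verify both identities by a direct duality computation, reducing everything to the common integral $\int_{\Omega}\langle\Lambda_{\omega}x,G(\omega)\rangle\,d\mu(\omega)$ through which both $T$ and $U$ are defined. No new analytic input is needed beyond the definitions of the two operators and the bookkeeping of the isometric identification from Remark \ref{00}.

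For $(i)$ I would fix $G\in L^{q}(\mu,\oplus_{\omega\in\Omega}H_{\omega})$ and evaluate $U^{*}\psi(G)\in X^{*}$ against an arbitrary $x\in X$. Using the definition of the adjoint together with the duality notation of Remark \ref{00},
$$\langle x,U^{*}\psi(G)\rangle=\langle Ux,\psi(G)\rangle=\langle Ux,G\rangle=\int_{\Omega}\langle\Lambda_{\omega}x,G(\omega)\rangle\,d\mu(\omega)=\langle x,TG\rangle,$$
where the middle equality is the defining relation of $U$ (equivalently, one invokes Proposition \ref{T*} to replace $(Ux)(\omega)$ by $\Lambda_{\omega}x$). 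Since $x$ is arbitrary this yields $U^{*}\psi(G)=TG$ for every $G$, that is $U^{*}=T$ under the isometric identification of $(L^{p}(\mu,\oplus_{\omega\in\Omega}H_{\omega}))^{*}$ with $L^{q}(\mu,\oplus_{\omega\in\Omega}H_{\omega})$.

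For $(ii)$ both $T^{*}J_{1}$ and $\psi^{*}J_{2}U$ are bounded operators from $X$ into $(L^{q}(\mu,\oplus_{\omega\in\Omega}H_{\omega}))^{*}$, so it suffices to check that they agree when tested on an arbitrary $G\in L^{q}(\mu,\oplus_{\omega\in\Omega}H_{\omega})$. I would unwind the left side using the definitions of the adjoint $T^{*}$ and of the canonical embedding $J_{1}$, getting $(T^{*}J_{1}x)(G)=(J_{1}x)(TG)=(TG)(x)=\int_{\Omega}\langle\Lambda_{\omega}x,G(\omega)\rangle\,d\mu(\omega)$, and then unwind the right side with $\psi^{*}$ and $J_{2}$, getting $(\psi^{*}J_{2}Ux)(G)=(J_{2}Ux)(\psi G)=(\psi G)(Ux)=\langle Ux,G\rangle=\int_{\Omega}\langle\Lambda_{\omega}x,G(\omega)\rangle\,d\mu(\omega)$. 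As the two values coincide for all $x$ and $G$, the operator identity $T^{*}J_{1}=\psi^{*}J_{2}U$ follows.

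The computations are routine once the types are pinned down; the only genuine care required is the bookkeeping of the three identifications in play—the isometry $\psi$ and the two canonical embeddings $J_{1},J_{2}$—and applying each adjoint on the correct side of its pairing. I would finally comment on the role of the lower Bochner $pg$-frame hypothesis in $(ii)$: the bare identity in fact already holds for any Bochner $pg$-Bessel family, but the lower bound is what ensures, via Proposition \ref{6} and Lemma \ref{0} together with $(i)$, that $X$ is reflexive and that $T=U^{*}$ is surjective, so that $J_{1}$ is invertible and the relation can be read as the explicit formula $T^{*}=\psi^{*}J_{2}UJ_{1}^{-1}$ for the adjoint of the synthesis operator.
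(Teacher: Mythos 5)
Your proof is correct and takes essentially the same route as the paper's: both parts reduce to the identical duality computation, unwinding $T$, $U$, $\psi$, $J_{1}$, $J_{2}$ against arbitrary $x\in X$ and $G\in L^{q}(\mu,\oplus_{\omega\in\Omega}H_{\omega})$, exactly as the paper does. Your closing observation is also accurate and slightly sharper than the paper: the paper's proof of $(ii)$ invokes reflexivity (hence the lower bound) only to assert that $J_{1}$ and $J_{2}$ are surjective, a fact the computation never actually uses, whereas you correctly note that the identity holds for any Bochner $pg$-Bessel family and that the lower bound only matters if one wants to invert $J_{1}$ and read off $T^{*}=\psi^{*}J_{2}UJ_{1}^{-1}$.
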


\begin{proof}
$(i)$ For each $G\in L^{q}(\mu,\oplus_{\omega \in \Omega} H_{\omega})$ and $x\in X$, we have
$$<Ux,G>=\int_{\Omega}<\Lambda_{\omega}(x),G(\omega)>d\mu(\omega)=<x,TG>,$$
so $U^*=T$.
\\$(ii)$ Since $X$ and $L^{p}(\mu,\oplus_{\omega \in \Omega} H_{\omega})$ are reflexive, so $J_1$ and $J_2$
are surjective. For each $G\in L^{q}(\mu,\oplus_{\omega \in \Omega} H_{\omega})$ and $x\in X$
\begin{align*}
<G,T^*J_1x>=<TG,J_1x>=<x,TG>=<Ux,G>,
\end{align*}
also
$$<G,\psi^{*}J_2Ux>=<\psi G,J_2Ux>=<Ux,\psi G>=<Ux,G>.$$
Hence $T^{*}J_1=\psi^{*}J_2U$.
\end{proof}

The following theorem characterizes Bochner $pg$-frames by operator $T$  defined by (\ref{T}).

\begin{theorem}\label{8}
Consider the family $\{\Lambda_{\omega}\in B(X,H_{\omega}): \omega \in \Omega\}$.
\\$(i)$ Let $\{\Lambda_{\omega}\}_{\omega \in \Omega}$ be a Bochner $pg$-frame for $X$ with respect to $\{H_{\omega}\}_{\omega\in \Omega}$. Then the operator $T$ defined by (\ref{T}) is a surjective bounded operator.
\\$(ii)$ Let $(\Omega,\Sigma,\mu)$ be a  measure space where $\mu$ is $\sigma$-finite and for each $x\in X$, $\omega \longmapsto \Lambda_{\omega}(x)$ be Bochner measurable. Let  the operator $T$ defined by (\ref{T}) be a surjective bounded operator. Then $\{\Lambda_{\omega}\}_{\omega \in \Omega}$ is a Bochner $pg$-frame for $X$ with respect to $\{H_{\omega}\}_{\omega\in \Omega}$.
\end{theorem}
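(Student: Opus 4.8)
The plan is to exploit the identity $U^{*}=T$ from Lemma \ref{7}$(i)$ together with the duality principle of Lemma \ref{0}, which links surjectivity of an adjoint to the original operator being bounded below. The two parts are essentially the two directions of this equivalence, with the frame inequalities translated into the boundedness and bounded-below-ness of $U$ through the concrete formula $(Ux)(\omega)=\Lambda_{\omega}x$ a.e. of Proposition \ref{T*}.

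For part $(i)$, I would first observe that a Bochner $pg$-frame is in particular a Bochner $pg$-Bessel family, so by Proposition \ref{3} the operator $T$ is well-defined and bounded, and by Lemma \ref{7}$(i)$ we have $U^{*}=T$. The lower frame bound, combined with Proposition \ref{T*}, gives
$$A\|x\|\leq\Big(\int_{\Omega}\|\Lambda_{\omega}x\|^{p}d\mu(\omega)\Big)^{\frac{1}{p}}=\|Ux\|_{p},\quad x\in X,$$
so $U$ is bounded below, hence $U:X\to R_{U}$ has a bounded inverse. Lemma \ref{0} then yields that $U^{*}=T$ is surjective, which completes $(i)$.

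For part $(ii)$ the order of the steps matters, since the equality $U^{*}=T$ is only available once $\{\Lambda_{\omega}\}_{\omega\in\Omega}$ is already known to be Bessel. I would first apply Theorem \ref{5}: the standing hypotheses ($\mu$ $\sigma$-finite and $\omega\mapsto\Lambda_{\omega}x$ Bochner measurable) together with $T$ well-defined and bounded force $\{\Lambda_{\omega}\}_{\omega\in\Omega}$ to be a Bochner $pg$-Bessel family with Bessel bound $\|T\|$, which is exactly the upper frame inequality. Now Proposition \ref{3} makes $U$ well-defined and bounded, and Lemma \ref{7}$(i)$ gives $U^{*}=T$. Since $T=U^{*}$ is assumed surjective, Lemma \ref{0} furnishes a bounded inverse of $U$ on $R_{U}$, i.e. a constant $A>0$ with $\|Ux\|_{p}\geq A\|x\|$; invoking Proposition \ref{T*} once more rewrites this as the lower frame bound. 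Together with the Bessel estimate and the assumed measurability, this shows that $\{\Lambda_{\omega}\}_{\omega\in\Omega}$ is a Bochner $pg$-frame.

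The only delicate point is the bookkeeping around Lemma \ref{0}: one must ensure that the identification $U^{*}=T$ (which tacitly uses the isometric isomorphism $\psi$ of Remark \ref{00} to identify $(L^{p}(\mu,\oplus_{\omega\in\Omega}H_{\omega}))^{*}$ with $L^{q}(\mu,\oplus_{\omega\in\Omega}H_{\omega})$) is genuinely in force, and that ``$T$ surjective onto $X^{*}$'' matches the hypothesis ``$U^{*}$ surjective'' of Lemma \ref{0}. In part $(ii)$ the structural subtlety is that one cannot even speak of $U^{*}=T$ before Theorem \ref{5} has upgraded the bounded operator $T$ to a Bessel family. Beyond these two points, everything reduces to translating the frame inequalities into metric properties of $U$ via Proposition \ref{T*}, so I expect no substantial computational obstacle.
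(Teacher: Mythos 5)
Your proposal is correct and follows essentially the same route as the paper's own proof: Proposition \ref{3} and the lower-bound argument (the proof of Lemma \ref{5'}) plus Lemma \ref{0} and Lemma \ref{7}$(i)$ for part $(i)$, and Theorem \ref{5}, Lemma \ref{0}, and Proposition \ref{T*} for part $(ii)$. Your added care about establishing the Bessel property via Theorem \ref{5} \emph{before} invoking $U^{*}=T$ in part $(ii)$ is exactly the ordering the paper uses, just made explicit.
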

\begin{proof}
$(i)$ Since $\{\Lambda_{\omega}\}_{\omega \in \Omega}$ is a Bochner $pg$-frame, by Proposition \ref{3}, $T$ is well-defined and bounded.
From the proof of Lemma \ref{5'}, $U$ is bounded below. So, by Lemma \ref{0} and Lemma \ref{7}$(i)$, $U^{*}=T$ is surjective.
\\$(ii)$ Since $T$ is bounded,  $\{\Lambda_{\omega}\}_{\omega \in \Omega}$ is a Bochner $pg$-Bessel family, by Theorem \ref{5}. Since $T=U^{*}$ is surjective, $U$ has a bounded inverse on $R_{U}$ by Lemma \ref{0}. So there exists $A>0$ such that for all $ x\in X$, $\|Ux\|_{p}\geq A\|x\|$. By Proposition \ref{T*}, for all $x\in X$
$$A\|x\|\leq \|Ux\|_{p}=(\int_{\Omega}\|\Lambda_{\omega}(x)\|^{p}d\mu(\omega))^{\frac{1}{p}}.$$
Hence $\{\Lambda_{\omega}\}_{\omega \in \Omega}$ is a  Bochner $pg$-frame.
\end{proof}

\begin{corollary}
If  $\{\Lambda_{\omega}\}_{\omega \in \Omega}$ is  a Bochner $pg$-frame for $X$ with respect to $\{H_{\omega}\}_{\omega\in \Omega}$ and $q$ is the conjugate exponent of $p$
then for each $x^{*}\in X^{*}$ there exists  $G\in L^{q}(\mu,\oplus_{\omega \in \Omega} H_{\omega})$
such that
$$<x,x^{*}>=\int_{\Omega}<\Lambda_{\omega}(x),G(\omega)>d\mu(\omega),\quad  x\in X.$$
\end{corollary}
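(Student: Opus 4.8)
The plan is to observe that the integral on the right-hand side is nothing but the pairing $<x,TG>$, where $T$ is the synthesis operator defined in (\ref{T}), and then to invoke the surjectivity of $T$ already established in Theorem \ref{8}. By the very definition of $T$, for every $G\in L^{q}(\mu,\oplus_{\omega \in \Omega} H_{\omega})$ and every $x\in X$ we have
$$<x,TG>=\int_{\Omega}<\Lambda_{\omega}(x),G(\omega)>d\mu(\omega).$$
Hence the assertion of the corollary is equivalent to the statement that every $x^{*}\in X^{*}$ admits a representation $x^{*}=TG$ for some $G$, that is, that $T$ maps onto $X^{*}$.

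Since $\{\Lambda_{\omega}\}_{\omega \in \Omega}$ is assumed to be a Bochner $pg$-frame, Theorem \ref{8}$(i)$ provides exactly this: $T$ is a surjective bounded operator onto $X^{*}$. Therefore, given an arbitrary $x^{*}\in X^{*}$, I would pick $G\in L^{q}(\mu,\oplus_{\omega \in \Omega} H_{\omega})$ with $TG=x^{*}$, and then read off the claimed identity by unwinding the pairing:
$$<x,x^{*}>=<x,TG>=\int_{\Omega}<\Lambda_{\omega}(x),G(\omega)>d\mu(\omega),\quad x\in X.$$

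There is essentially no genuine obstacle to overcome here, as the corollary is a direct reformulation of the surjectivity of the synthesis operator, which has been proven in the preceding theorem. The only matter to confirm is the trivial one that the functional $<x,TG>$ expands to the stated integral, and this is immediate from the defining formula (\ref{T}). Accordingly, the entire argument reduces to citing Theorem \ref{8}$(i)$ and substituting the definition of $T$.
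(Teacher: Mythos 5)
Your proof is correct and is exactly the argument the paper intends: the paper dismisses this corollary as ``obvious'' precisely because, as you note, it is an immediate restatement of the surjectivity of the synthesis operator $T$ from Theorem \ref{8}$(i)$ combined with the defining formula (\ref{T}). Nothing further is needed.
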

\begin{proof}
It is obvious.
\end{proof}

The optimal Bochner $pg$-frame bounds can be expressed in terms of synthesis and analysis operators.
\begin{theorem}
Let  $\{\Lambda_{\omega}\}_{\omega \in \Omega}$ be a Bochner $pg$-frame for $X$ with respect to $\{H_{\omega}\}_{\omega\in \Omega}$. Then $\|T\|$ and $\|\tilde{U}\|$ are the optimal upper and lower Bochner $pg$-frame bounds of $\{\Lambda_{\omega}\}_{\omega \in \Omega}$, respectively, where $\tilde{U}$ is the inverse of $U$ on $R_{U}$ and $T$, $U$ are the synthesis and analysis operators of $\{\Lambda_{\omega}\}_{\omega \in \Omega}$, respectively.
\end{theorem}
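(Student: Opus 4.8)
The plan is to translate both optimal bounds into operator norms using the identity $\|Ux\|_{p}=(\int_{\Omega}\|\Lambda_{\omega}x\|^{p}d\mu(\omega))^{\frac{1}{p}}$ from Proposition \ref{T*}, so that the defining inequality (\ref{df1}) reads $A\|x\|\leq\|Ux\|_{p}\leq B\|x\|$ for all $x\in X$. For the upper bound, observe that a constant $B$ is an admissible upper Bochner $pg$-frame bound precisely when $\|Ux\|_{p}\leq B\|x\|$ for every $x$, i.e.\ when $B\geq\|U\|$; hence the optimal (smallest) upper bound is exactly $\|U\|$. Since $U^{*}=T$ by Lemma \ref{7}$(i)$ and $\|U^{*}\|=\|U\|$ for bounded operators between Banach spaces, this optimal upper bound equals $\|T\|$.

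For the lower bound I would first record that $U$ is bounded below (this is precisely the lower inequality, already exploited in Lemma \ref{5'}), so $U$ is injective and, by Lemma \ref{5'}, $R_{U}$ is closed. Consequently the inverse $\tilde{U}$ of $U$ on $R_{U}$ is a well-defined bounded operator $\tilde{U}:R_{U}\longrightarrow X$. Writing $y=Ux$ and unwinding the norms gives $\|\tilde{U}\|=\sup_{x\neq0}\frac{\|x\|}{\|Ux\|_{p}}=\bigl(\inf_{x\neq0}\frac{\|Ux\|_{p}}{\|x\|}\bigr)^{-1}$. A constant $A$ is an admissible lower bound exactly when $A\leq\|Ux\|_{p}/\|x\|$ for all $x\neq0$, i.e.\ when $A\leq\inf_{x\neq0}\|Ux\|_{p}/\|x\|$; therefore the optimal (largest) lower bound is this infimum, which by the previous display is $\|\tilde{U}\|^{-1}$.

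Finally I would check optimality directly: the value $\|U\|=\|T\|$ is itself an admissible upper bound while no smaller constant can be, and $\|\tilde{U}\|^{-1}$ is an admissible lower bound while no larger constant can be, so both are genuinely the best constants. The one substantive technical point — and the step I would handle most carefully — is the existence and boundedness of $\tilde{U}$, which rests on combining injectivity of $U$ with the closedness of $R_{U}$ from Lemma \ref{5'} (equivalently, on the bounded inverse theorem, cf.\ Lemma \ref{0}); everything else is a supremum/infimum bookkeeping argument. I note that the computation delivers $\|\tilde{U}\|^{-1}$ as the sharp lower bound, so the lower-bound assertion is to be understood via this reciprocal.
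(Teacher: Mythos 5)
Your proof is correct. The lower-bound half coincides with the paper's own argument: the same substitution $y=Ux$, justified by Proposition \ref{T*} and the closedness of $R_{U}$ from Lemma \ref{5'}, gives $\inf_{\|x\|=1}\|Ux\|_{p}=\|\tilde{U}\|^{-1}$, and you are right to read the theorem's lower-bound claim through this reciprocal --- that is exactly what the paper's proof produces as well. The upper bound is where you take a mildly different route: the paper recycles the identity $(\int_{\Omega}\|\Lambda_{\omega}x\|^{p}d\mu(\omega))^{\frac{1}{p}}=\|F_{x}\|=\sup_{\|G\|_{q}=1}|<x,TG>|$ from the proof of Theorem \ref{5} and then interchanges the suprema over $\|x\|=1$ and $\|G\|_{q}=1$ to land on $\sup_{\|G\|_{q}=1}\|TG\|=\|T\|$, never mentioning $U$ at all; you instead observe that the optimal upper bound is $\|U\|$ (immediate from Proposition \ref{T*}) and convert it to $\|T\|$ via $U^{*}=T$ (Lemma \ref{7}(i)) together with the Hahn--Banach fact that adjoints preserve norms. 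Your version is legitimate because the identification $\psi$ of $L^{q}(\mu,\oplus_{\omega\in\Omega}H_{\omega})$ with $(L^{p}(\mu,\oplus_{\omega\in\Omega}H_{\omega}))^{*}$ is isometric, so indeed $\|T\|=\|U^{*}\|=\|U\|$; what it buys is reuse of a lemma the paper already proved and an explicit operator-theoretic statement, while the paper's supremum swap avoids invoking norm-preservation of adjoints altogether. Both arguments rest on the same underlying duality, so the difference is one of packaging rather than substance, and your handling of the only delicate point (boundedness of $\tilde{U}$ from injectivity plus closed range) matches the paper's.
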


\begin{proof}
From the proof of Theorem \ref{5}, for each $x\in X$, we have
$$(\int_{\Omega}\|\Lambda_{\omega}x\|^{p}d\mu(\omega))^{\frac{1}{p}}=\|F_{x}\|= \sup_{\|G\|_{q}=1}|<x,TG>|.$$
Therefore
\begin{align*}
&\sup_{\|x\|=1}(\int_{\Omega}\|\Lambda_{\omega}x\|^{p}d\mu(\omega))^{\frac{1}{p}}=\sup_{\|x\|=1}\|F_{x}\|=
\sup_{\|x\|=1}\sup_{\|G\|_{q}=1}|<x,TG>|
\\=&\sup_{\|G\|_{q}=1}\sup_{\|x\|=1}|<x,TG>|
=\sup_{\|G\|_{q}=1}\|TG\|=\|T\|.
\end{align*}
By Proposition \ref{T*}, $\|Ux\|_{p}=(\int_{\Omega}\|\Lambda_{\omega}x\|^{p}d\mu(\omega))^{\frac{1}{p}}$, consequently
$$\inf_{\|x\|=1}\|Ux\|_{p}=\inf_{\|x\|=1}(\int_{\Omega}\|\Lambda_{\omega}x\|^{p}d\mu(\omega))^{\frac{1}{p}}.$$
The operator $U:X\longrightarrow L^{p}(\mu,\oplus_{\omega \in \Omega} H_{\omega})$ is bounded below, so it has bounded inverse $\tilde{U}:R_{U}\longrightarrow X$. We have
$$\inf_{\|x\|=1}\|Ux\|_{p}=\inf_{x\neq0}\frac{\|Ux\|_{p}}{\|x\|}=\inf_{\tilde{Uy}\neq0}\frac{\|y\|_{p}}{\|\tilde{Uy}\|}=
\inf_{y\neq0}\frac{\|y\|_{p}}{\|\tilde{Uy}\|}=\frac{1}{\sup_{y\neq0}\frac{\|\tilde{Uy}\|}{\|y\|_{p}}}=\frac{1}{\|\tilde{U}\|},$$
hence $\inf_{\|x\|=1}(\int_{\Omega}\|\Lambda_{\omega}x\|^{p}d\mu(\omega))^{\frac{1}{p}}=\frac{1}{\|\tilde{U}\|}$.
\end{proof}

\section{Bochner $qg$-Riesz bases}
In this section, we define Bochner $qg$-Riesz bases which are generalization of Riesz bases and characterize their properties.

\begin{definition}\label{Riesz}
Let $1<q<\infty$. A family $\{\Lambda_{\omega}\in B(X,H_{\omega}): \omega \in \Omega\}$ is called a Bochner $qg$-Riesz basis for $X^{*}$ with respect to $\{H_{\omega}\}_{\omega\in \Omega}$, if :
\\$(i)$ $\{x:\Lambda_{\omega}x=0,\,a.e.~[\mu]\}=\{0\}$,
\\$(ii)$ for each $x\in X$, $\omega \longmapsto \Lambda_{\omega}(x)$ is Bochner measurable and the operator
$T$ defined by (\ref{T})
is well-defined and there are positive constants $A$ and $B$ such that
$$ A\|G\|_{q}\leq \|TG\|\leq B\|G\|_{q} ,\quad  G\in L^{q}(\mu,\oplus_{\omega \in \Omega} H_{\omega}).$$
\end{definition}
$A$ and $B$ are called the lower and upper Bochner $qg$-Riesz basis bounds of $\{\Lambda_{\omega}\}_{\omega \in \Omega}$, respectively.

Under some conditions, a Bochner $qg$-Riesz basis is a Bochner $pg$-frame, more precisely:
\begin{proposition}\label{R1}
Suppose that $(\Omega,\Sigma,\mu)$ is a measure space where $\mu$ is $\sigma$-finite and consider the family $\{\Lambda_{\omega}\in B(X,H_{\omega}): \omega \in \Omega\}$.\\
$(i)$ Assume that for each $x\in X$, $\omega \longmapsto \Lambda_{\omega}(x)$ is Bochner measurable. $\{\Lambda_{\omega}\}_{\omega \in \Omega}$ is a Bochner $qg$-Riesz basis for $X^{*}$ with respect to $\{H_{\omega}\}_{\omega\in \Omega}$
if and only if the operator $T$ defined by (\ref{T}) is an invertible bounded operator from $L^{q}(\mu,\oplus_{\omega \in \Omega} H_{\omega})$ onto  $X^{*}$.\\
$(ii)$ Let  $\{\Lambda_{\omega}\}_{\omega \in \Omega}$ be a Bochner $qg$-Riesz basis for $X^{*}$ with respect to $\{H_{\omega}\}_{\omega\in \Omega}$ with the optimal upper Bochner $qg$-Riesz basis bound $B$. If $p$ is the conjugate exponent of $q$ then $\{\Lambda_{\omega}\}_{\omega \in \Omega}$ is a Bochner $pg$-frame for $X$ with respect to $\{H_{\omega}\}_{\omega\in \Omega}$ with  optimal upper Bochner $pg$-frame bound $B$.
\end{proposition}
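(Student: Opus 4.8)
The plan is to establish the equivalence in $(i)$ in both directions and then read off $(ii)$ from it, using the characterization of Theorem \ref{8} together with the formula for the optimal upper bound. I begin with the easy implication of $(i)$: suppose $T$ is a bounded invertible operator from $L^{q}(\mu,\oplus_{\omega\in\Omega}H_{\omega})$ onto $X^{*}$. Then $T^{-1}$ is bounded, and from $\|TG\|\le\|T\|\,\|G\|_{q}$ together with $\|G\|_{q}=\|T^{-1}TG\|\le\|T^{-1}\|\,\|TG\|$ I obtain the two inequalities of Definition \ref{Riesz} with $B=\|T\|$ and $A=\|T^{-1}\|^{-1}$. For the remaining condition, if $\Lambda_{\omega}x=0$ a.e.\ then $\langle x,TG\rangle=\int_{\Omega}\langle\Lambda_{\omega}x,G(\omega)\rangle\,d\mu(\omega)=0$ for all $G$, and surjectivity of $T$ forces $x$ to annihilate $X^{*}$, so $x=0$; measurability is assumed. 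Hence $\{\Lambda_{\omega}\}_{\omega\in\Omega}$ is a Bochner $qg$-Riesz basis.

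For the converse in $(i)$, assume $\{\Lambda_{\omega}\}_{\omega\in\Omega}$ is a Bochner $qg$-Riesz basis. The two inequalities in Definition \ref{Riesz} say exactly that $T$ is bounded and bounded below, so $T$ is injective with closed range, and the entire difficulty is to prove that $T$ is surjective. I would pass to the analysis operator: since $T$ is bounded, Theorem \ref{5} shows that $\{\Lambda_{\omega}\}_{\omega\in\Omega}$ is a Bochner $pg$-Bessel family, so the operator $U$ of (\ref{U}) is well-defined and bounded with $U^{*}=T$ by Lemma \ref{7}$(i)$ and $Ux=\{\Lambda_{\omega}x\}_{\omega\in\Omega}$ by Proposition \ref{T*}; the first condition of Definition \ref{Riesz} then says precisely that $U$ is injective. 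Because $T=U^{*}$ has closed range, the closed range theorem gives that $R_{U}$ is closed, and an injective bounded operator with closed range is bounded below, so $U$ is bounded below; Lemma \ref{0} applied to $U$ then yields that $U^{*}=T$ is surjective. Thus $T$ is a bounded bijection onto $X^{*}$, hence invertible. I expect this step---converting the bounded-below property of $T$ into surjectivity of $T$ by routing it through the injectivity of $U$ supplied by the first Riesz condition---to be the crux of the argument.

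Finally, for $(ii)$, part $(i)$ shows that $T$ is a surjective bounded operator, so by Theorem \ref{8}$(ii)$ (using $\sigma$-finiteness of $\mu$ and the measurability built into the Riesz basis) the family $\{\Lambda_{\omega}\}_{\omega\in\Omega}$ is a Bochner $pg$-frame for $X$. To match the optimal bounds I note that the optimal upper Bochner $qg$-Riesz basis bound is $B=\sup_{\|G\|_{q}=1}\|TG\|=\|T\|$, while by the preceding theorem on the optimal Bochner $pg$-frame bounds the optimal upper frame bound equals $\|T\|$ as well, since the synthesis operator of the frame is the same $T$. Hence the two optimal upper bounds coincide and equal $B$, which is the assertion.
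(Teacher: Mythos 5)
Your proof is correct and assembles essentially the same ingredients the paper's (very terse) proof cites: Theorem \ref{5}, Proposition \ref{T*}, Lemma \ref{7}, and the closed-range/duality theorems from Rudin, with Lemma \ref{0} serving in place of Rudin's duality theorem to turn boundedness below of $U$ into surjectivity of $T=U^{*}$. You also correctly isolate the one nontrivial point the paper leaves implicit---that boundedness below of $T$ alone does not yield surjectivity, and it is the injectivity of $U$ supplied by condition $(i)$ of Definition \ref{Riesz} that closes this gap---and your appeal to the optimal-bounds theorem in part $(ii)$ makes explicit what the paper's citation of Theorem \ref{8}$(ii)$ alone does not actually cover.
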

\begin{proof}$(i)$ By Theorem \ref{5} and Proposition \ref{T*}  and Lemma \ref{7}  and Theorems 3.12, 4.7 and 4.12 in \cite{Rudin}, it is obvious.
\\$(ii)$ By assumption and ($i$), the operator $T$ defined by (\ref{T}) is a bounded invertible operator. So by Theorem \ref{8} ($ii$), $\{\Lambda_{\omega}\}_{\omega \in \Omega}$ is a Bochner $pg$-frame for $X$ with respect to $\{H_{\omega}\}_{\omega\in \Omega}$ with the optimal upper Bochner $pg$-frame bound $B$.
\end{proof}

Next theorem presents some equivalent conditions for a Bochner $pg$-frame being a Bochner $qg$-Riesz basis.
\begin{theorem}\label{11}
Suppose that $(\Omega,\Sigma,\mu)$ is a measure space where $\mu$ is $\sigma$-finite.
Let $\{\Lambda_{\omega}\}_{\omega \in \Omega}$ be a Bochner $pg$-frame for $X$ with respect to $\{H_{\omega}\}_{\omega\in \Omega}$ with  synthesis operator $T$ and  analysis operator $U$ and $q$ be the conjugate exponent of $p$. Then the following statements are equivalent:
\\$(i)$ $\{\Lambda_{\omega}\}_{\omega \in \Omega}$ is a Bochner $qg$-Riesz basis for $X^{*}$.
\\$(ii)$ $T$ is injective.
\\$(iii)$ $R_{U}=L^{p}(\mu,\oplus_{\omega \in \Omega} H_{\omega})$.
\end{theorem}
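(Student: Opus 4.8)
The plan is to establish the three-way equivalence by exploiting the operator-theoretic facts already available once $\{\Lambda_{\omega}\}_{\omega \in \Omega}$ is known to be a Bochner $pg$-frame. The key ingredients I would collect first are: $T$ and $U$ are bounded (Proposition \ref{3}); $T$ is surjective (Theorem \ref{8}$(i)$); $R_{U}$ is closed (Lemma \ref{5'}); and $U^{*}=T$ (Lemma \ref{7}$(i)$). With these in hand, the theorem reduces to two short duality arguments, one for each of the two equivalences $(i)\Leftrightarrow(ii)$ and $(ii)\Leftrightarrow(iii)$.

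For $(i)\Leftrightarrow(ii)$, I would invoke Proposition \ref{R1}$(i)$, according to which $\{\Lambda_{\omega}\}_{\omega \in \Omega}$ is a Bochner $qg$-Riesz basis for $X^{*}$ precisely when $T$ is an invertible bounded operator from $L^{q}(\mu,\oplus_{\omega \in \Omega} H_{\omega})$ onto $X^{*}$. Since $T$ is already bounded and surjective, and both $L^{q}(\mu,\oplus_{\omega \in \Omega} H_{\omega})$ and $X^{*}$ are Banach spaces, the bounded inverse theorem makes invertibility of $T$ equivalent to injectivity of $T$. Thus $(i)$ holds if and only if $T$ is injective, which is $(ii)$.

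For $(ii)\Leftrightarrow(iii)$, I would use the identity $U^{*}=T$ from Lemma \ref{7}$(i)$. The kernel of $U^{*}$ coincides with the annihilator $(R_{U})^{\perp}$, since $U^{*}g=0$ means exactly that $g(Ux)=0$ for every $x\in X$. Hence $T=U^{*}$ is injective if and only if $(R_{U})^{\perp}=\{0\}$, which by the Hahn--Banach theorem is equivalent to $R_{U}$ being dense in $L^{p}(\mu,\oplus_{\omega \in \Omega} H_{\omega})$. Because Lemma \ref{5'} guarantees that $R_{U}$ is closed, density is the same as $R_{U}=L^{p}(\mu,\oplus_{\omega \in \Omega} H_{\omega})$, which is $(iii)$. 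Stringing the two equivalences together closes the cycle.

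The step I expect to demand the most care is the duality bookkeeping common to both halves: one must track the isometric isomorphism $\psi$ from Remark \ref{00} that identifies $(L^{p}(\mu,\oplus_{\omega \in \Omega} H_{\omega}))^{*}$ with $L^{q}(\mu,\oplus_{\omega \in \Omega} H_{\omega})$, so that the annihilator computation for $U^{*}$ lands in the correct space and the identity $U^{*}=T$ is applied consistently. Everything else is a routine appeal to the open mapping / bounded inverse theorem and to Hahn--Banach.
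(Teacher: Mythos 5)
Your proof is correct, and your treatment of $(i)\Leftrightarrow(ii)$ is essentially the paper's: both arguments rest on the surjectivity of $T$ from Theorem \ref{8}$(i)$, the bounded inverse theorem, and the characterization in Proposition \ref{R1}$(i)$. Where you genuinely diverge is in how $(iii)$ enters the cycle. The paper connects $(iii)$ directly to $(i)$: from Proposition \ref{R1} it gets invertibility of $T$, hence of $T^{*}$, and then uses the bidual identity $T^{*}J_1=\psi^{*}J_2U$ of Lemma \ref{7}$(ii)$ --- whose proof needs the reflexivity of $X$ and of $L^{p}(\mu,\oplus_{\omega\in\Omega}H_{\omega})$, supplied by Proposition \ref{6} --- to conclude $R_{U}=L^{p}(\mu,\oplus_{\omega\in\Omega}H_{\omega})$; the converse $(iii)\Rightarrow(i)$ likewise passes through invertibility of $U$ and $T=U^{*}$. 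You instead connect $(iii)$ to $(ii)$ by the elementary annihilator identity $\ker U^{*}=(R_{U})^{\perp}$ (transported through $\psi$), Hahn--Banach, and the closedness of $R_{U}$ from Lemma \ref{5'}. Your route avoids Lemma \ref{7}$(ii)$ and all bidual/reflexivity bookkeeping, so it is shorter and more self-contained; the paper's route reuses machinery it has already built and obtains $(i)\Rightarrow(iii)$ without mentioning annihilators. Both are valid, and your explicit care with the isomorphism $\psi$ is well placed: the identity $U^{*}=T$ of Lemma \ref{7}$(i)$ holds only modulo that identification, so the annihilator computation must indeed be carried out in $(L^{p}(\mu,\oplus_{\omega\in\Omega}H_{\omega}))^{*}$ and then pulled back to $L^{q}(\mu,\oplus_{\omega\in\Omega}H_{\omega})$, exactly as you describe.
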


\begin{proof}
$(i)\rightarrow (ii)$: It is obvious.
\\$(ii)\rightarrow (i)$: By Theorem \ref{8}($i$), the operator $T$ defined by (\ref{T}) is bounded and onto. By $(ii)$, $T$ is also injective. Therefore $T$ has a bounded inverse $T^{-1}:X^{*}\longrightarrow L^{q}(\mu,\oplus_{\omega \in \Omega} H_{\omega})$ and hence $\{\Lambda_{\omega}\}_{\omega \in \Omega}$ is a Bochner $qg$-Riesz basis for $X^{*}$.
\\$(i)\rightarrow (iii)$: By Theorem \ref{R1}, $T$ is invertible, so $T^*$ is invertible.
Lemma \ref{7}($ii$) implies that $R_{U}=L^{p}(\mu,\oplus_{\omega \in \Omega} H_{\omega})$.
\\$(iii)\rightarrow (i)$: Since the operator $U$ is invertible, by Lemma \ref{7} , $T=U^{*}$ is invertible.
\end{proof}

\section{Perturbation of Bochner $pg$-frames}

A perturbation of discrete frame has been discussed in \cite{gj91}. In this section, we present another version of perturbation for Bochner $pg$-frames.


\begin{theorem}
Suppose that $(\Omega,\Sigma,\mu)$ is a measure space where $\mu$ is $\sigma$-finite.
Let $\{\Lambda_{\omega}\}_{\omega \in \Omega}$ be a a Bochner $pg$-frame for $X$ with respect to $\{H_{\omega}\}_{\omega\in \Omega}$ and $q$ be the conjugate exponent of $p$. Let $\{\Theta_{\omega}\in B(H,H_{\omega}): \omega \in \Omega\}$ be a family such that for all $x\in X$, $\omega\longmapsto \Theta_{\omega}(x)$ is Bochner measurable. Assume that there exist constants $\lambda_{1},\lambda_{2},\gamma$ such that
$$0\leq\lambda_2<1, \quad -\lambda_2\leq\lambda_1<1, \quad 0\leq\gamma<(1-\lambda_1-2\lambda_2)A$$ and
\begin{align}\label{per}
&|\int_{\Omega}<(\Lambda_{\omega}-\Theta_{\omega})x,G(\omega)>d\mu(\omega)|\\
\leq&\lambda_{1}|\int_{\Omega}<\Lambda_{\omega}x,G(\omega)>d\mu(\omega)|
+\lambda_{2}|\int_{\Omega}<\Theta_{\omega}x,G(\omega)>d\mu(\omega)|+\gamma\|G\|_{q},\nonumber
\end{align}
for all $G\in L^{q}(\mu,\oplus_{\omega \in \Omega} H_{\omega})$ and $x\in X$. Then $\{\Theta_{\omega}\}_{\omega \in \Omega}$ is a Bochner $pg$-frame  for $X$ with respect to $\{H_{\omega}\}_{\omega\in \Omega}$ with bounds
$$A\big[\frac{(1-\lambda_1-2\lambda_2)-\frac{\gamma}{A}}{1-\lambda_{2}}\big]\qquad and\qquad B\big[\frac{1+\lambda_{1}+\frac{\gamma}{B}}{1-\lambda_{2}}\big],$$
where $A$ and $B$ are  the Bochner $pg$-frame bounds for $\{\Lambda_{\omega}\}_{\omega \in \Omega}$.
\end{theorem}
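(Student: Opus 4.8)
The plan is to verify the two inequalities in (\ref{df1}) for $\{\Theta_{\omega}\}_{\omega\in\Omega}$ directly, transporting the hypothesis (\ref{per}) through the $L^{p}$--$L^{q}$ duality already exploited in the proof of Theorem \ref{5}. Write $a(x)=(\int_{\Omega}\|\Lambda_{\omega}x\|^{p}d\mu(\omega))^{1/p}$ and, once it is known to be finite, $b(x)=(\int_{\Omega}\|\Theta_{\omega}x\|^{p}d\mu(\omega))^{1/p}$. The first task is to see that $\{\Theta_{\omega}x\}_{\omega\in\Omega}\in L^{p}(\mu,\oplus_{\omega}H_{\omega})$: reasoning as in Proposition \ref{3}, the map $\omega\mapsto\langle\Theta_{\omega}x,G(\omega)\rangle$ is measurable, and (\ref{per}) bounds the functional $G\mapsto\int_{\Omega}\langle\Theta_{\omega}x,G(\omega)\rangle d\mu$ on $L^{q}(\mu,\oplus_{\omega}H_{\omega})$ in terms of the finite corresponding functional for $\Lambda$, so Lemma \ref{4} applies and $b(x)<\infty$. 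Then, exactly as in Theorem \ref{5}, both quantities admit the dual description $a(x)=\sup_{\|G\|_{q}=1}|\int_{\Omega}\langle\Lambda_{\omega}x,G\rangle d\mu|$ and $b(x)=\sup_{\|G\|_{q}=1}|\int_{\Omega}\langle\Theta_{\omega}x,G\rangle d\mu|$.

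Since $a(x)$, $b(x)$ and $\|x\|$ are all positively homogeneous of degree one in $x$, it suffices to establish the two bounds for $\|x\|=1$ and extend them by homogeneity. Fix such an $x$ and a $G$ with $\|G\|_{q}=1$. Splitting $\langle\Theta_{\omega}x,\cdot\rangle=\langle\Lambda_{\omega}x,\cdot\rangle-\langle(\Lambda_{\omega}-\Theta_{\omega})x,\cdot\rangle$, applying (\ref{per}), bounding the two integrals by $a(x)$ and $b(x)$, and taking the supremum over $\|G\|_{q}=1$, I obtain $b(x)\le(1+\lambda_{1})a(x)+\lambda_{2}b(x)+\gamma$, that is
\[ b(x)\le\frac{(1+\lambda_{1})a(x)+\gamma}{1-\lambda_{2}}\le\frac{(1+\lambda_{1})B+\gamma}{1-\lambda_{2}}=B\Big[\frac{1+\lambda_{1}+\frac{\gamma}{B}}{1-\lambda_{2}}\Big], \]
using $a(x)\le B$; this is the upper bound.

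For the lower bound the naive reverse triangle inequality only yields the denominator $1+\lambda_{2}$, so the decisive step is to feed the upper estimate back in. Taking the supremum over $\|G\|_{q}=1$ in (\ref{per}) gives $\|\{\Lambda_{\omega}x\}_{\omega\in\Omega}-\{\Theta_{\omega}x\}_{\omega\in\Omega}\|_{p}\le\lambda_{1}a(x)+\lambda_{2}b(x)+\gamma$, and inserting the bound on $b(x)$ just found collapses the coefficients to
\[ \|\{\Lambda_{\omega}x\}_{\omega\in\Omega}-\{\Theta_{\omega}x\}_{\omega\in\Omega}\|_{p}\le\frac{(\lambda_{1}+\lambda_{2})a(x)+\gamma}{1-\lambda_{2}}. \]
Hence, from $b(x)\ge a(x)-\|\{\Lambda_{\omega}x\}_{\omega\in\Omega}-\{\Theta_{\omega}x\}_{\omega\in\Omega}\|_{p}$,
\[ b(x)\ge\frac{(1-\lambda_{1}-2\lambda_{2})a(x)-\gamma}{1-\lambda_{2}}\ge\frac{(1-\lambda_{1}-2\lambda_{2})A-\gamma}{1-\lambda_{2}}=A\Big[\frac{(1-\lambda_{1}-2\lambda_{2})-\frac{\gamma}{A}}{1-\lambda_{2}}\Big], \]
where I use $a(x)\ge A$ and the fact that the conditions $-\lambda_{2}\le\lambda_{1}$ and $0\le\gamma<(1-\lambda_{1}-2\lambda_{2})A$ force $1-\lambda_{1}-2\lambda_{2}>0$, so the coefficient of $a(x)$ is positive and the bracket is strictly positive. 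Extending both inequalities from $\|x\|=1$ to arbitrary $x$ by homogeneity, and recalling that $\omega\mapsto\Theta_{\omega}x$ is Bochner measurable, the family $\{\Theta_{\omega}\}_{\omega\in\Omega}$ satisfies (\ref{df1}) with the asserted bounds and is therefore a Bochner $pg$-frame.

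I expect the two points that require care to be the membership $\{\Theta_{\omega}x\}_{\omega\in\Omega}\in L^{p}$ (without which the dual description of $b(x)$ is unavailable), which is precisely what Lemma \ref{4} supplies, and the order of operations in the lower bound: one must estimate $\|\{\Lambda_{\omega}x\}_{\omega\in\Omega}-\{\Theta_{\omega}x\}_{\omega\in\Omega}\|_{p}$ through the already-proved upper bound for $b(x)$ rather than bounding $b(x)$ from below directly, since only the former produces the denominator $1-\lambda_{2}$ demanded by the statement.
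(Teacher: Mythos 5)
Your overall strategy---establish $\{\Theta_{\omega}x\}_{\omega\in\Omega}\in L^{p}(\mu,\oplus_{\omega\in\Omega}H_{\omega})$ via Lemma \ref{4}, use the dual description of the $L^{p}$-norm coming from Remark \ref{00}, and obtain both bounds by norm estimates---is sound, and your upper bound derivation is correct (it is the same in content as the paper's, which packages it as boundedness of the operator $W$ plus Theorem \ref{5}). But there is a genuine gap in the lower bound. Your step ``taking the supremum over $\|G\|_{q}=1$ in (\ref{per}) gives $\|\{\Lambda_{\omega}x\}_{\omega\in\Omega}-\{\Theta_{\omega}x\}_{\omega\in\Omega}\|_{p}\le\lambda_{1}a(x)+\lambda_{2}b(x)+\gamma$'' implicitly bounds the term $\lambda_{1}|\int_{\Omega}\langle\Lambda_{\omega}x,G(\omega)\rangle d\mu(\omega)|$ above by $\lambda_{1}a(x)$. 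That is legitimate only when $\lambda_{1}\ge 0$: if $\lambda_{1}<0$, multiplying the correct inequality $|\int_{\Omega}\langle\Lambda_{\omega}x,G(\omega)\rangle d\mu(\omega)|\le a(x)$ by $\lambda_{1}$ reverses it, and a $G$ that nearly maximizes the left-hand side of (\ref{per}) may pair only weakly with $\{\Lambda_{\omega}x\}_{\omega\in\Omega}$, making the right-hand side of (\ref{per}) larger, not smaller, than $\lambda_{1}a(x)+\lambda_{2}b(x)+\gamma$. The theorem's hypotheses explicitly allow $-\lambda_{2}\le\lambda_{1}<0$, so the step fails on part of the admissible parameter range. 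Indeed, this is precisely where the assumption $-\lambda_{2}\le\lambda_{1}$ must be used; in your write-up it appears only in the remark about $1-\lambda_{1}-2\lambda_{2}>0$, which already follows from $0\le\gamma<(1-\lambda_{1}-2\lambda_{2})A$ alone---a sign that the assumption was never genuinely invoked.

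The repair is short and stays inside your framework: work pointwise in $G$ \emph{before} taking any supremum. For $\|x\|=1$ and $\|G\|_{q}=1$, insert the triangle inequality $|\int_{\Omega}\langle\Theta_{\omega}x,G(\omega)\rangle d\mu(\omega)|\le|\int_{\Omega}\langle\Lambda_{\omega}x,G(\omega)\rangle d\mu(\omega)|+|\int_{\Omega}\langle(\Lambda_{\omega}-\Theta_{\omega})x,G(\omega)\rangle d\mu(\omega)|$ into the right-hand side of (\ref{per}) and solve, obtaining
\[
(1-\lambda_{2})\Big|\int_{\Omega}\langle(\Lambda_{\omega}-\Theta_{\omega})x,G(\omega)\rangle d\mu(\omega)\Big|
\le(\lambda_{1}+\lambda_{2})\Big|\int_{\Omega}\langle\Lambda_{\omega}x,G(\omega)\rangle d\mu(\omega)\Big|+\gamma .
\]
Since $\lambda_{1}+\lambda_{2}\ge 0$ by hypothesis, you may now bound the right-hand side by $(\lambda_{1}+\lambda_{2})a(x)+\gamma$ and then take the supremum over $G$, which yields your key estimate $\|\{(\Lambda_{\omega}-\Theta_{\omega})x\}_{\omega\in\Omega}\|_{p}\le\frac{(\lambda_{1}+\lambda_{2})a(x)+\gamma}{1-\lambda_{2}}$ directly; the ``feed the upper bound back in'' step becomes unnecessary. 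From there your reverse triangle inequality, $a(x)\ge A$, and homogeneity finish the proof with exactly the stated bounds. With this one-line fix your argument is correct and genuinely different from the paper's: the paper proves the lower bound operator-theoretically (closed range of $U$, the bijection $Q=U:X\to R_{U}$, a Hahn--Banach extension of $(Q^{-1})^{*}x^{*}$, and the isomorphism $\psi$ to produce $G\in L^{q}(\mu,\oplus_{\omega\in\Omega}H_{\omega})$ with $x^{*}=TG$ and $\|G\|_{q}\le A^{-1}\|x^{*}\|$, followed by a norming functional), whereas your route needs only the isometry between $(L^{q}(\mu,\oplus_{\omega\in\Omega}H_{\omega}))^{*}$ and $L^{p}(\mu,\oplus_{\omega\in\Omega}H_{\omega})$ and is considerably shorter.
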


\begin{proof}
For each $x\in X$ and $G\in L^{q}(\mu,\oplus_{\omega \in \Omega} H_{\omega})$ we have
\begin{align*}
&|\int_{\Omega}<\Theta_{\omega}x,G(\omega)>d\mu(\omega)|
\\\leq&|\int_{\Omega}<(\Lambda_{\omega}-\Theta_{\omega})x,G(\omega)>d\mu(\omega)|
+|\int_{\Omega}<\Lambda_{\omega}x,G(\omega)>d\mu(\omega)|
\\\leq&(1+\lambda_{1})|\int_{\Omega}<\Lambda_{\omega}x,G(\omega)>d\mu(\omega)|
+\lambda_{2}|\int_{\Omega}<\Theta_{\omega}x,G(\omega)>d\mu(\omega)|
\\&+\gamma\|G\|_{q}.
\end{align*}
So
\begin{align*}
&|\int_{\Omega}<\Theta_{\omega}x,G(\omega)>d\mu(\omega)|
\\\leq&\frac{1+\lambda_{1}}{1-\lambda_{2}}|\int_{\Omega}<\Lambda_{\omega}x,G(\omega)>d\mu(\omega)|
+\frac{\gamma}{1-\lambda_{2}}\|G\|_{q}
\\\leq&\frac{1+\lambda_{1}}{1-\lambda_{2}}B\|G\|_{q}\|x\|+\frac{\gamma}{1-\lambda_{2}}\|G\|_{q}
\\=&[\frac{1+\lambda_{1}}{1-\lambda_{2}}B\|x\|+\frac{\gamma}{1-\lambda_{2}}]\|G\|_{q}.
\end{align*}
Now, define $W:L^{q}(\mu,\oplus_{\omega \in \Omega} H_{\omega})\longrightarrow X^*$ by
$$<x,WG>=\int_{\Omega}<\Theta_{\omega}x,G(\omega)>d\mu(\omega),\quad  x\in X,~G\in L^{q}(\mu,\oplus_{\omega \in \Omega} H_{\omega}).$$
Since
\begin{align*}
\|WG\|=\sup_{\|x\|=1}|<x,WG>|=&\sup_{\|x\|=1}|\int_{\Omega}<\Theta_{\omega}x,G(\omega)>d\mu(\omega)|
\\\leq& [\frac{1+\lambda_{1}}{1-\lambda_{2}}B+\frac{\gamma}{1-\lambda_{2}}]\|G\|_{q},
\end{align*}
so $W$ is well-defined and bounded. By Theorem \ref{5}, $\{\Theta_{\omega}\}_{\omega \in \Omega}$ is a
Bochner $pg$-Bessel family  for $X$ with bound $B\big[\frac{1+\lambda_{1}+\frac{\gamma}{B}}{1-\lambda_{2}}\big]$.
\\Now, we show that $\{\Theta_{\omega}\}_{\omega \in \Omega}$ satisfies the lower Bochner $pg$-frame condition. Let $T$ and $U$ be the synthesis and analysis operators of $\{\Lambda_{\omega}\}_{\omega \in \Omega}$, respectively. By Proposition \ref{T*}, for all $x\in X$,
$$A\|x\|\leq\|Ux\|_{p}\leq B\|x\|.$$
By Lemma \ref{5'}, $R_U$ is a closed subspaces of $L^{p}(\mu,\oplus_{\omega \in \Omega} H_{\omega})$, so $Q=U:X\longrightarrow R_U$ is a bijective bounded operator, hence $(Q^{-1})^*:X^*\longrightarrow R_U^*$ is alike. Since
$B^{-1}\leq\|Q^{-1}\|\leq A^{-1}$, so $\|(Q^{-1})^*\|=\|Q^{-1}\|\leq A^{-1}$. Let $x^*\in X^*$ and $S=(Q^{-1})^*$, then $\|S\|\leq A^{-1}$ and $S(x^*)\in R_U^*$, by Hahn-Banach theorem there exists $\varphi\in L^{p}(\mu,\oplus_{\omega \in \Omega} H_{\omega})$ such that $\varphi|_{R_U}=S(x^*)$ and $\|\varphi\|=\|S(x^*)\|$, it follows that
\begin{eqnarray}\label {per2}
\|\varphi\|=\|S(x^*)\|\leq\|S\|\|x^*\|\leq A^{-1}\|x^*\|.
\end{eqnarray}
By Remark \ref{00}, there exists $G\in L^{q}(\mu,\oplus_{\omega \in \Omega} H_{\omega})$ such that $\psi(G)=\varphi$, then
\begin{eqnarray}\label {per3}
\|G\|_{q}=\|\varphi\|\leq A^{-1}\|x^*\|.
\end{eqnarray}
Since $x^*=Q^*(Q^{-1})^*(x^*)$, from (\ref{per}), we have  for each $x\in X$
\begin{align*}
&<x,x^*>=<x,(Q^*S)(x^*)>=<Ux,S(x^*)>=<Ux,\varphi>
\\=&<Ux,\psi(G)>=\int_{\Omega}<\Lambda_{\omega}x,G(\omega)>d\mu(\omega).
\end{align*}
From (\ref{per}) and (\ref{per3}), we obtain that
\begin{align*}
&\|x^*-WG\|
\\=&\sup_{\|x\|=1}|<x,x^*-WG>|
\\=&\sup_{\|x\|=1}|\int_{\Omega}<(\Lambda_{\omega}-\Theta_{\omega})x,G(\omega)>\mu(\omega)|
\\\leq&\sup_{\|x\|=1}\big[\lambda_1|<x,x^*>|+\lambda_2|<x,WG>|+\gamma\|G\|_{q}\big]
\\\leq&\sup_{\|x\|=1}\big[(\lambda_1+\lambda_2)|<x,x^*>|+\lambda_2|<x,WG-x^*>|+\gamma A^{-1}\|x^*\|\big]
\\\leq&\sup_{\|x\|=1}\big[(\lambda_1+\lambda_2)\|x^*\|\|x\|+\lambda_2\|WG-x^*\|\|x\|+\gamma A^{-1}\|x^*\|\big]
\\\leq&(\lambda_1+\lambda_2+\gamma A^{-1})\|x^*\|+\lambda_2\|WG-x^*\|,
\end{align*}
which implies
$$\|WG-x^*\|\leq \frac{(\lambda_1+\lambda_2+)A+\gamma}{(1-\lambda_2)A}\|x^*\|.$$
For a given $x\in X$, there exists $x^*\in X^*$ such that $$\|x^*\|=1,~\|x\|=x^*(x).$$ Hence
\begin{align*}
\|x\|=&x^*(x)=|<x,x^*>|=|<x,x^*-WG>+<x,WG>|
\\\leq&|<x,x^*-WG>|+|<x,WG>|
\\\leq&\frac{(\lambda_1+\lambda_2+)A+\gamma}{(1-\lambda_2)A}\|x^*\|\|x\|
+|\int_{\Omega}<\Theta_{\omega}x,G(\omega)>d\mu(\omega)|
\\\leq&\frac{(\lambda_1+\lambda_2+)A+\gamma}{(1-\lambda_2)A}\|x^*\|\|x\|+
(\int_{\Omega}\|\Theta_{\omega}x\|^{p}d\mu(\omega))^{\frac{1}{p}}\|G\|_{q}
\\\leq&\frac{(\lambda_1+\lambda_2+)A+\gamma}{(1-\lambda_2)A}\|x^*\|\|x\|+A^{-1}\|x^*\|
(\int_{\Omega}\|\Theta_{\omega}x\|^{p}d\mu(\omega))^{\frac{1}{p}},
\end{align*}
therefore
$$\frac{(1-\lambda_1-2\lambda_2)A-\gamma}{1-\lambda_2}\|x\|\leq
(\int_{\Omega}\|\Theta_{\omega}x\|^{p}d\mu(\omega))^{\frac{1}{p}}.$$
\end{proof}


\subsection*{Acknowledgements}
The authors would like to sincerely thank Prof. Dr. Gitta Kutyniok for her valuable comments.


\begin{thebibliography}{1}
\bibitem{cg-frame} Abdollahpour, M.R.,  Faroughi,  M.H.:
\textit{Continuous G-Frames in Hilbert Spaces.} Southeast Asian Bull. Math.
\textbf{32} (2008), 1--19.

\bibitem{pg-frame}  Abdollahpour, M.R.,  Faroughi,  M.H.,  Rahimi, A.:
\textit{PG-Frames in Banach spaces.} Methods Funct. Anal. Topology
\textbf{13} (2007), 201--210.

\bibitem{hy43} Ali, S.T.,  Antoine, J.P.,  Gazeau, J.P.:
\textit{Continuous frames in Hilbert space.} Ann. Physics
\textbf{222}  (1993), 1--37.


\bibitem {cay}  Casazza, P.G.,  Kutyniok, G.:
\textit{Frame of subspaces.} Contemp. Math.
\textbf{345}  (2004), 87--113.

\bibitem {c1}  Cengiz, B.:
\textit{The dual of the Bochner space $L^{p}(\mu,E)$ for arbitrary $\mu$.} Turk. J. Math.
\textbf{22} (1998), 343--348.

\bibitem {gj91}  Christensen, O.:
\textit{Introduction to frames and Riesz bases.} Birkhauser, Boston 2003.

\bibitem{Daub}  Daubechies, I., Grossmann, A., Meyer, Y.:
\textit{Painless nonorthogonal expansions.} J. Math. Phys.
\textbf{27}(5) (1986), 1271--1283.

\bibitem{die1} Diestel, J., Uhl, Jr., J.J.:
\textit{Vector Measure.} Math. Surveys no. 15, Amer. Math. Soc., 1997.

\bibitem {Duff} Duffin, R.J., Schaeffer, A.C.:
\textit{A class of nonharmonic Fourier series.} Trans. Amer. Math. Soc.
\textbf{72}  (1952), 341--366.

\bibitem {Fleming} Fleming,  R.J.,  Jamison, J.E.:
\textit{Isometries on Banach spaces: vector-valued function spaces (Volume 2).} Monographs and Surveys in Pure and Applied Mathematics 138, Chapman \&
Hall, 2008.

\bibitem{Heuser}  Heuser, H.:
\textit{Functional Analysis.} John Wiley, New York, 1982.

\bibitem{Hu1}  Cao, H.X.,   Li, L., Chen,  Q.J., Ji, G.X.:
\textit{$(P,Y)$-operator frames for a Banach space.} J. Math. Anal. Appl.
\textbf{347} (2008), 583--591.


\bibitem {Megginson} Megginson, R.E.:
\textit{An introduction to Banach space theory.} Springer-Verlag, New York, 1998.

\bibitem {Murphy}  Murphy, G.J.:
\textit{C*-algebras and operator theory.} Academic Press Inc., 1990.

\bibitem {hir08}  Pedersen, G.K.:
\textit{Analysis  Now.} Springer-Verlag. New York, 1998.

\bibitem {hir103}   Rahimi, A.,  Najati, A.,  Dehghan, Y.N.:
\textit{Continuous frames in Hilbert spaces.} Methods Funct. Anal. Topology
\textbf{12}(2) (2006), 170-182.

\bibitem {Rudin} Rudin,  W.:
\textit{Functional Analysis.}  MacGraw-Hill, New York, 1973.



\end{thebibliography}
\end{document}